\begin{document}

\newcommand{\qed}{\hfill \mbox{\raggedright \rule{.08in}{.086in}}}

\title{Consensus Algorithms and the Decomposition-Separation Theorem}

\author{Sadegh~Bolouki~and~
        Roland~P.~Malham\'e
\thanks{S.~Bolouki and R.P.~Malham\'e are with the Department of Electrical Engineering, Polytechnique Montr\'eal, Montreal, QC, H3T 1J4 CA e-mail: (sadegh.bolouki,roland.malhame@polymtl.ca).}}

\maketitle


\begin{abstract}
Convergence properties of time inhomogeneous Markov chain based discrete and continuous time linear consensus algorithms are analyzed. Provided that a so-called infinite jet flow property is satisfied by the underlying chains, necessary conditions for both consensus and multiple consensus are established. A recent extension by Sonin of the classical Kolmogorov-Doeblin decomposition-separation for homogeneous Markov chains to the inhomogeneous case is then employed to show that the obtained necessary conditions are also sufficient when the chain is of class $\mathcal{P}^*$, as defined by Touri and Nedi\'c. It is also shown that Sonin's theorem leads to a rediscovery and generalization of most of the existing related consensus results in the literature.

\end{abstract}


\IEEEpeerreviewmaketitle


\section{Introduction}

\newtheorem{theorem}{Theorem}
\newtheorem{definition}{Definition}
\newtheorem{lemma}{Lemma}
\newtheorem{corollary}{Corollary}
\newtheorem{remark}{Remark}
\newtheorem{proposition}{Proposition}
\newtheorem{example}{Example}

Linear consensus algorithms and their convergence properties have gained increasing attention in the past decade. They were first introduced in \cite{DeGroot:74}, where the author considered the case when the interactions rates between any two agents are time-invariant. Later, more general cases were considered in \cite{Chatterjee:77,Tsit:86,Tsit:84,Tsit:89a,Jadba:03,Tsit:05,Moreau:05,Hend:06,Hend:08,Li:04}. The authors aimed at identifying sufficient conditions for consensus to occur, i.e., for states to asymptotically converge to the same value. Beside consensus, multiple consensus has been the subject of many articles, e.g., \cite{Lorenz:05,Touri:10b,Touri:11c,Hend:11,Bolouki:12b}. Multiple consensus refers to the case when each agent state converges, as time grows large, to an individual limit which may or may not be different from the individual limits of other agent states. Considering the work on linear consensus algorithms, \cite{Hend:11,Touri:11c,Bolouki:12b} appear to provide the most general sufficient conditions for the occurrence of consensus or multiple consensus in a multi-agent system with dynamics described by a linear consensus algorithm.

In this paper, we deal with the limiting behavior of a general linear consensus algorithm in both discrete and continuous time. Let $\mathcal{V} = \{ 1,\ldots, N \}$ be the set of agents. In discrete time, we consider an $N$-agent system with linear update equation:
\begin{equation}
  x(t+1) = A(t) x(t), \forall t \geq 0.
  \label{DS-modelss}
\end{equation}
In (\ref{DS-modelss}), $t$ indicates the discrete time index, $x(t) = [x_1(t) \cdots x_N(t)]'$, $t \geq 0$, is the vector of agent states, where prime ($'$) indicates the transposition, $A(t)$, $t \geq 0$, is the matrix of \textit{interaction rates} $a_{ij}(t)$, $1 \leq i,j \leq N$, and $\{A(t)\}$ is the \textit{underlying chain} or \textit{transition chain} of the system, which is a chain of $(N \times N)$ row-stochastic matrices, i.e, for every $t \geq 0$, all elements of $A(t)$ are non-negative and each row of $A(t)$ sums up to 1. Throughout the paper, for simplicity, we refer to a row-stochastic matrix as a stochastic matrix. Since $A(t)$ is a stochastic matrix for every $t \geq 0$, sequence $\{ x(t) \}$, by definition, forms a \textit{backward Markov chain} with transition chain $\{A(t)\}$ (notice the evolution is described by a right hand multiplication by a column vector instead of the usual left hand multiplication by a row vector). Although we mainly focus on the discrete time case in this work, we shall extend our results to the continuous time case.

If all components of $x(t)$ asymptotically converge to the same limit, irrespective of the time index $t$ or the values at which they are initialized, \textit{unconditional global consensus}, or simply, \textit{unconditional consensus}, is said to occur. Furthermore, if there exists a fixed partition of the $N$ agents such that unconditional consensus occurs for the corresponding subvectors of $x(t)$, then \textit{unconditional multiple consensus} is said to occur. The subsets in the partition are then said to form consensus clusters. It is well known that under dynamics (\ref{DS-modelss}), unconditional consensus is equivalent to \textit{ergodicity} of chain $\{A(t)\}$ (see \cite{Chatterjee:77}), i.e., the property that backward products converge to matrices with identical rows. Furthermore, \cite{Bolouki:12b} and \cite{Touri:3} establish that a consensus algorithm with update chain $\{A(t)\}$ will induce multiple consensus if $\{A(t)\}$  is  so-called \textit{class-ergodic}, i.e., for every $t_0 \geq 0$, the product $A(t)A(t-1)\cdots A(t_0)$ converges, as $t \rightarrow \infty$. For class-ergodic chains, set $\mathcal{V}$ can be partitioned into \textit{ergodic classes}, whereby $i,j$ in $\mathcal{V}$ belong to the same ergodic class if the difference between the $i$th and $j$th rows of matrix product $A(t)A(t-1)\cdots A(t_0)$ vanishes, as $t \rightarrow \infty$. Under multiple consensus, the agent indices within the ergodic classes are the same as those within consensus clusters.

Sonin, in his so-called Decomposition-Separation (D-S) Theorem \cite{Sonin:08}, suggests an elegant and illuminating physical interpretation of the dynamics in (\ref{DS-modelss}), which we now report for completeness: Start with a forward propagating Markov chain with $(N \times N)$ transition matrices $P(t)$ and associated sequence of probability distribution vectors $m(t)$:
\begin{equation}
  m'(t+1) = m'(t) P(t), \forall t \geq 0.
\end{equation}
Interpret $m_i(t)$, $i \in \mathcal{V}$, $t \geq 0$, as the \textit{volume} of some liquid, say water for example, in a cup $i$ (out of $N$ cups), at time $t \geq 0$, while $p_{ij}(t)m_i(t)$ is the volume of liquid transferred from cup $i$ to cup $j$ at time $t \geq 0$ (see Fig. \ref{cups}).

\begin{figure}[h]
  \begin{center}
  \captionsetup{justification=centering}
    \includegraphics[scale=.6]{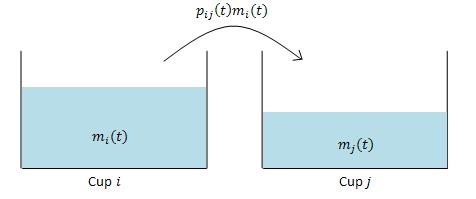}
    \caption{A physical interpretation of a Markov chain.}
    \label{cups}
  \end{center}
\end{figure}

The volume of liquid in cup $i$, $\forall i \in \mathcal{V}$, is assumed to be initialized as $m_i(0)$ at time zero. Now, let $x_i(t)$, $i \in \mathcal{V}$, $t \geq 0$, be the \textit{concentration} of a certain substance, such as sugar, alcohol, etc., within the liquid of cup $i$ at time $t$. We first assume that the volume of each cup is non-zero at all times in order to make the concentration well-defined. Moreover, assume, for every $i \in \mathcal{V}$, that $x_i(t)$ is initialized as $x_i(0)$ at time zero. It is not difficult to show that, for every $i \in \mathcal{V}$, and $t \geq 0$:
\begin{equation}
  x_i(t+1) = \frac{\sum_{j \in \mathcal{V}}p_{ji}(t)m_j(t)x_j(t)}{m_i(t+1)}.
\label{01}
\end{equation}
Let:
\begin{equation}
  x(t) \triangleq \begin{bmatrix} x_1(t) \cdots x_N(t) \end{bmatrix}',
\label{02}
\end{equation}
and $(N \times N)$ matrix $A(t)$, with elements $a_{ij}(t)$, $i,j \in \mathcal{V}$, be defined by:
\begin{equation}
  a_{ij}(t) = p_{ji}(t)m_j(t)/m_i(t+1), \forall t \geq 0.
\label{03}
\end{equation}
From (\ref{01}), (\ref{02}), and (\ref{03}), we conclude that:
\begin{equation}
  x(t+1) = A(t) x(t), \forall t \geq 0.
\end{equation}
Since $A(t)$ is stochastic for every $t \geq 0$ (check (\ref{03})), $\{ x(t) \}$ forms a backward Markov chain, with transition chain $\{ A(t) \}$, as in (\ref{DS-modelss}). Removing the non-zero volume assumption, $\{ A(t) \}$ is constructed in such a way that elements of $A(t)$, $t \geq 0$, satisfy:
\begin{equation}
  m_i(t+1) a_{ij}(t) = m_j(t)p_{ji}(t), \forall i,j \in \mathcal{V}, \forall t \geq 0.
\label{f-b}
\end{equation}
The D-S Theorem, \cite{Sonin:08}, describes the limiting behavior of both $m(t)$ and $x(t)$, as $t$ grows large. However, to take advantage of the D-S Theorem in a general consensus algorithm (\ref{DS-modelss}), one has to, first, answer the following questions: Starting with a backward propagating Markov chain generated by $\{ A(t) \}$, is it always possible to find an associated forward propagating Markov chain, with distribution vector $\{ m(t) \}$, generated by a transition chain $\{ P(t) \}$, satisfying an equation of the form (\ref{f-b})? And how, if so? As discussed in this paper, due to the existence of a so-called \textit{absolute probability sequence} for $\{ A(t) \}$, as proved in fundamental work \cite{Kolmo:36}, one could show the existence of the desired chains satisfying (\ref{f-b}). More specifically, any absolute probability sequence $\{ m(t) \}$ admitted by $\{ A(t) \}$, would help construct a forward propagating sequence of transition matrices, via (\ref{f-b}).

In this paper, it is established that, based on the D-S Theorem, all these previous results can be subsumed. Furthermore, inspired by \cite{Touri:3}, and recalling the notion of jets in Markov chains from \cite{Blackwell:45}, we introduce a property of chains resulting in necessary conditions for the unconditional occurrence of consensus or multiple consensus in (\ref{DS-modelss}). We also establish that, under an additional assumption, that is the chain being in the so-called Class $\mathcal{P}^*$ \cite{Touri:11c}, these necessary conditions also become sufficient.

In addition to the notation defined in the beginning of this section, we adopt the following notation throughout the paper. Letter $t$ stands for either discrete or continuous time indices according to context. $\Phi(t,\tau)$, $t,\tau \geq 0$, represents the state transition matrix of the considered system, which can be defined in either the discrete time domain, as in (\ref{DS-modelss}), or the continuous time domain, as we will see later on. Moreover, $\Phi_{i}(t,\tau)$ and $\Phi_{i,j}(t,\tau)$, $1 \leq i,j \leq N$, denote the $i$th column and the $(i,j)$th element (intersection of $i$th row and $j$th column) of $\Phi(t,\tau)$ respectively, while $\Phi'_i(t,\tau)$ refers to the $i$th column of $\Phi'(t,\tau)$ (the prime acts first), which is also the transpose of the $i$th row of $\Phi(t,\tau)$. For an arbitrary vector $v \in \mathbb{R}^N$, and $1 \leq i \leq N$, $v_i$ denotes the $i$th element of $v$. The overline ( $\bar{}$ ) on a subset indicates complementation of the subset in the universal set of interest.

The rest of the paper is organized as follows. In Section \ref{IJFP}, we state necessary conditions for class-ergodicity and ergodicity of a chain. The D-S Theorem, and its application in a general linear consensus algorithm, are discussed in Section \ref{D-S Theorem}. In Section \ref{Touri}, based on the D-S Theorem, we analyze the convergence properties of chains in Class $\mathcal{P}^*$. It is shown, in Section \ref{Previous Work}, that this analysis leads to a generalization of most of the existing results in the literature on convergence properties of linear consensus algorithms. A geometric approach is introduced in Section \ref{geo-approach-discrete} that applies to both discrete and continuous time consensus protocols. From the geometric framework built, we extend our analysis to the continuous time case in Section \ref{cons-cont}. Concluding remarks end the paper in Section \ref{Conclusion-DS}.


\section{The Infinite Jet-Flow Property}
\label{IJFP}
Inspired by \cite{Blackwell:45}, as reported in \cite{Sonin:08} and \cite{Touri:3}, in this section, we introduce a property of chains of stochastic matrices, herein called the \textit{infinite jet-flow property}, leading to necessary conditions for ergodicity and class-ergodicity of the chain.

\begin{definition}
  For a given subset $\mathcal{V}'$ of finite set $\mathcal{V} = \{1,\ldots,N\}$, a \textit{jet} $J$ in $\mathcal{V}'$ is a sequence $\{ J(t) \}$ of subsets of $\mathcal{V}'$. A jet $J$ in $\mathcal{V}'$ is called \textit{proper} if $\emptyset \neq J(t) \subsetneq \mathcal{V}'$, $\forall t \geq 0$ (see Fig. \ref{proper jet}). Moreover, for a jet $J$, \textit{jet-limit} $J^*$ denotes the limit of the sequence $\{ J(t) \}$, as $t$ grows large, if it exists, in the sense that the sequence becomes constant after a finite time. When the elements of the sequence are all identical to a subset $S$ of $\mathcal{V}$, the jet will be referred to as jet $S$.
\label{DS-jet-def}
\end{definition}

\begin{figure}[h]
  \begin{center}
  \captionsetup{justification=centering}
    \includegraphics[scale=.5]{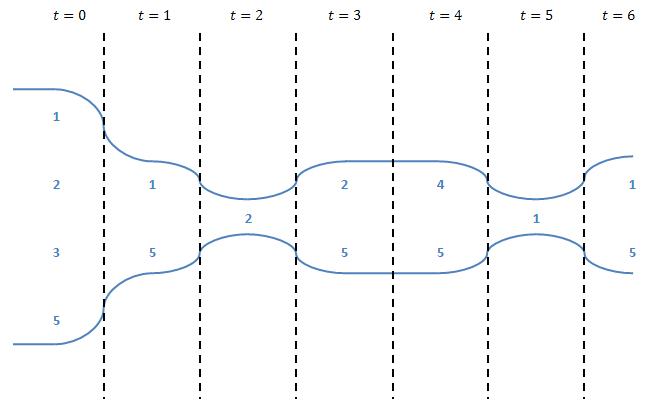}
    \caption{Example of a proper jet $J$ in $\mathcal{V}=\{1,2,3,4,5\}$: $J(0) = \{ 1,2,3,5 \}$, $J(1)=\{1,5\}$, $J(2) = \{ 2 \}$, $J(3) = \{ 2,5 \}$, $\ldots$}
    \label{proper jet}
  \end{center}
\end{figure}

\begin{definition}
  A tuple of jets $(J^1,\ldots,J^c)$ is a \textit{jet-partition} of $\mathcal{V}$, if $(J^1(t),\ldots,J^c(t))$ is a partition of $\mathcal{V}$ for every $t \geq 0$.
\end{definition}

\begin{definition}
  Let chain $\{ A(t) \}$ of stochastic matrices be given. For any two disjoint jets $J^s$ and $J^k$ in $\mathcal{V}$, $U_A(J^s,J^k)$, or simply $U(J^s,J^k)$, when no ambiguity results, denotes the total interactions between the two jets over the infinite time interval, as defined by:
	\begin{equation}
	  \begin{array}{ll}
	    \hspace{-.1in}U(J^s,J^k) = \sum_{t=0}^{\infty}& \hspace{-.1in} \Big[\sum_{i \in J^s(t+1)}\sum_{j \in J^k(t)} a_{ij}(t)\\ & + \sum_{i \in J^k(t+1)}\sum_{j \in J^s(t)} a_{ij}(t) \Big].
		\end{array}
	\label{U}
	\end{equation}
Moreover, $U_{A(t)}(J^s,J^k)$, or simply, $U_t(J^s,J^k)$, denotes the interactions between the two jets at time $t$. More specifically,
\begin{equation}
  \begin{array}{ll}
    U_t(J^s,J^k) & \hspace{-.1in}= \sum_{i \in J^s(t+1)}\sum_{j \in J^k(t)} a_{ij}(t) \vspace{.05in} \\ & \hspace{.1in}+ \sum_{i \in J^k(t+1)}\sum_{j \in J^s(t)} a_{ij}(t).
	\end{array}
\label{U_n}
\end{equation}
\label{U-def}
\end{definition}

\begin{definition}
  The \textit{complement of jet} $J$ in $\mathcal{V}$, denoted by $\mathcal{V}\backslash J$, or simply, $\bar{J}$, is the jet defined by the set sequence $\{ \mathcal{V} \backslash J(t) \}$.
\label{jet-complement-def}
\end{definition}

\begin{definition}
  A chain $\{ A(t) \}$ of stochastic matrices is said to have the \textit{infinite jet-flow property} over subset $\mathcal{V}'$ of $\mathcal{V}$ if, for every proper jet $J$ in $\mathcal{V}'$, $U(J,\mathcal{V}' \backslash J)$ is unbounded. If $\mathcal{V}' = \mathcal{V}$, chain $\{A(t)\}$ is simply said to have the infinite jet-flow property.
\label{IJFP-def}
\end{definition}

\begin{example}
  The following chain $\{A(t)\}_{t \geq 0}$ is an example of chains with the infinite jet-flow property:
\begin{equation}
  A(t) ~=~ \begin{bmatrix} 1 & 0 & 0 \\ 1-\frac{1}{t+1} & 0 & \frac{1}{t+1} \\ 0 & 0 & 1
	\end{bmatrix}, \text{ if } t \text{ is even},
\label{ex2}
\end{equation}
and
\begin{equation}
  A(t) ~=~ \begin{bmatrix} 1 & 0 & 0 \\ \frac{1}{t+1} & 0 & 1-\frac{1}{t+1} \\ 0 & 0 & 1
	\end{bmatrix}, \text{ if } t \text{ is odd}.
\label{ex1}
\end{equation}
It is not easy, at this stage, to show that chain $\{A(t)\}$ defined by (\ref{ex2}--\ref{ex1}) has the infinite jet-flow property. In Lemma \ref{weak-aperiodicity-IJFP} stated later in the paper, we suggest a way to check the infinite jet-flow property of a chain that implies the infinite jet-flow property of $\{A(t)\}$ defined by (\ref{ex2}--\ref{ex1}).
\label{example with IJFP}
\end{example}

\begin{example}
Chain $\{A(t)\}_{t \geq 0}$ defined by:
\begin{equation}
  A(t) ~=~ \begin{bmatrix} 1 & 0 & 0 \\ 1-\frac{1}{(t+1)^2} & 0 & \frac{1}{(t+1)^2} \\ 0 & 0 & 1
	\end{bmatrix}, \text{ if } t \text{ is even},
\label{ex4}
\end{equation}
and
\begin{equation}
  A(t) ~=~ \begin{bmatrix} 1 & 0 & 0 \\ \frac{1}{(t+1)^2} & 0 & 1-\frac{1}{(t+1)^2} \\ 0 & 0 & 1
	\end{bmatrix}, \text{ if } t \text{ is odd},
\label{ex3}
\end{equation}
is an example of chains for which the infinite jet-flow property is not satisfied. More specifically, if we define jet $J$ by:
\begin{equation}
	J(t) ~=~
	\begin{cases}
		\{ 1 \}	&	\text{ if $t$ is even}\\
		\{ 1 , 2 \}	&	\text{ if $t$ is odd}
	\end{cases}
\end{equation}
then we have:
\begin{equation}
	U(J, \mathcal{V} \backslash J) ~=~ \sum_{t=0}^{\infty} \frac{1}{(t+1)^2} ~<~ \infty,
\end{equation}
which shows that the infinite jet-flow property does not hold.
\label{example without IJFP}
\end{example}

In the following proposition, we state a sufficient condition for the infinite jet-flow property to hold.

\begin{definition}\cite{Touri:11c}
\label{infinite flow graph-1}
  For a chain $\{ A(t) \}$ of stochastic matrices, we define its \textit{infinite flow graph}, $G_A(\mathcal{V},E)$, by an undirected graph of size $N$, such that:
	\begin{equation}
	  E = \{ (i,j)|i,j \in \mathcal{V},\, i \neq j,\, \sum_{t=0}^\infty (a_{ij}(t)+a_{ji}(t)) = \infty \}.
	\end{equation}
The set of nodes of each connected component of $G_A(\mathcal{V},E)$ is called an \textit{island} of $\{A(t)\}$. Moreover, chain $\{A(t)\}$ is said to have the \textit{infinite flow property} if and only if $G_A(\mathcal{V},E)$ is connected.
\label{island-def}
\end{definition}

The following theorem states a necessary condition for class-ergodicity of chain $\{A(t)\}$ of stochastic matrices.

\begin{theorem}
  A chain $\{ A(t) \}$ of stochastic matrices is class-ergodic only if the infinite jet-flow property holds over each island of $\{A(t)\}$.
\label{IJFP-over-islands}
\end{theorem}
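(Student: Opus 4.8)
The plan is to argue the contrapositive: assuming some island $H$ of $\{A(n)\}$ fails the infinite jet-flow property, I will exhibit a proper jet along which ``substance'' gets trapped and use this to contradict convergence of the backward products, hence class-ergodicity. I would first record a reduction to a single island. By Definition \ref{island-def}, if $i \in H$ and $j \in M \setminus H$ then $(i,j) \notin E$, so $\sum_n (A_{ij}(n)+A_{ji}(n)) < \infty$; summing over the finitely many such pairs shows the total interaction between $H$ and $M \setminus H$ is finite. Hence for any proper jet $J$ with $\emptyset \neq J(n) \subsetneq H$, the quantities $U(J,\bar J)$ and the internal interaction $\sum_n U_n(J, H \setminus J)$ differ by a bounded amount, so a jet witnessing failure over $H$ is a proper jet $J$ in $H$ with $U(J,\bar J) < \infty$, and it suffices to contradict class-ergodicity from such a $J$.

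Next I would set up Sonin's liquid picture through Kolmogorov's theorem \cite{Kolmo:36}: take $\{m(n)\}$ to be an absolute probability sequence for $\{A(n)\}$ and let $\{P(n)\}$ be the associated forward chain satisfying (\ref{f-b}), as in \cite{Sonin:08}. With $X(n+1) = A(n)X(n)$, the amounts $w_i(n) := m_i(n)X_i(n)$ and the volumes $m_i(n)$ are both transported by $P(n)^t$, and the total amount $\sum_i w_i(n)$ is conserved. The crux is then a trapping estimate for the moving jet: writing $V_J(n) = \sum_{i \in J(n)} m_i(n)$ and $S_J(n) = \sum_{i \in J(n)} w_i(n)$, a telescoping of the forward recursion together with (\ref{f-b}), the bound $m_i(n) \le 1$, and (\ref{U_n}) yields $|V_J(n+1)-V_J(n)| \le U_n(J,\bar J)$ and $|S_J(n+1)-S_J(n)| \le \big(\max_i |X_i(0)|\big)\, U_n(J,\bar J)$. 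Summing and using $U(J,\bar J) < \infty$, both $V_J(n)$ and $S_J(n)$ have finite total variation; in particular their tail increments vanish, so asymptotically the jet behaves like a conserved container that merely relocates.

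Finally I would drive the contradiction. Because $H$ is connected in $G_A$, the jet $J$ cannot stabilize: if $\{J(n)\}$ were eventually a fixed $S$ with $\emptyset \ne S \subsetneq H$, then $U(J,\bar J)<\infty$ would leave no infinite-flow edge across the cut $(S, H\setminus S)$, disconnecting $H$. Fixing a large $N$ so that $\delta_N := \sum_{n\ge N} U_n(J,\bar J)$ is small and initializing $X_i(N) = \mathbf{1}[i \in J(N)]$, the trapping estimate keeps essentially all of the conserved amount inside the moving jet for every $n \ge N$; since $V_J(n)$ is then pinned near $V_J(N)$, the average concentration inside the jet stays near $1$, forcing $\max_i X_i(n)$ near $1$ and (by the complementary argument) $\min_i X_i(n)$ near $0$ for all later times. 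Thus a relocating separation of concentration persists and never homogenizes, which is incompatible with the island collapsing to a single limiting row.

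I expect this last step to be the main obstacle. Class-ergodicity only demands that the products converge --- possibly to several distinct limiting rows --- so a persistent spread is, by itself, permissible; the spread argument cleanly contradicts only the single-class case. The genuine work is to convert a non-stabilizing, amount-trapping jet into actual oscillation (failure of the Cauchy property) rather than convergence to a multi-class limit. This requires ruling out the degenerate possibilities $V_J(n) \to 0$ or $V_J(n) \to 1$ (a jet whose volume, or whose complement's volume, evaporates) and pairing the movement of $J$ with the conserved amount so as to localize the oscillation at a fixed agent, thereby contradicting convergence of that agent's concentration and completing the proof.
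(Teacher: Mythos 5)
You have correctly identified the reduction (cross-island flow is summable, so a jet witnessing failure over an island $H$ has $U(J,\bar J)<\infty$) and the key structural fact that such a jet cannot stabilize when $H$ is connected in $G_A$; but the obstacle you flag in your last paragraph is a genuine gap, and it is exactly the step where the paper does its work. Your aggregate trapping estimates on $V_J(n)=\sum_{i\in J(n)}m_i(n)$ and $S_J(n)=\sum_{i\in J(n)}w_i(n)$ cannot close the argument: bounded total variation of jet volume and jet amount pins nothing about any \emph{individual} agent's concentration, so it yields only a persistent max--min spread, which, as you note, is perfectly consistent with class-ergodicity (convergence to several limiting rows). Worse, your machinery imports a real degeneracy: for a general chain (no $\mathcal{P}^*$ assumption in this theorem) Kolmogorov's absolute probability sequence may have components that vanish or tend to zero, so $V_J(n)\to 0$ is not a corner case you can wave at --- the concentration interpretation can simply break down. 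The D-S/liquid apparatus is both unnecessary and unsuited here.

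The missing idea is to work \emph{per agent} rather than per aggregate, and to use class-ergodicity in the form ``every agent's state converges under any initialization at any time.'' The paper does this by first passing to an $\ell_1$-approximation $\{B(n)\}$ in which all interactions between $J$ and $M\backslash J$ are removed; by Lemma~1 of \cite{Touri:10b} this preserves the ergodic classes (and, since the perturbation is summable, the infinite flow graph and hence the islands). Under $B$, a two-valued initialization ($\alpha_1$ on $J(0)$, $\alpha_2$ elsewhere) propagates \emph{exactly}: $Y_i(n)=\alpha_1$ iff $i\in J(n)$. Class-ergodicity of $B$ then forces the jet to stabilize --- an agent entering and leaving $J(n)$ infinitely often would oscillate between $\alpha_1$ and $\alpha_2$, contradicting convergence of its state --- so $J(n)$ is eventually a fixed set $S^*$ with $\emptyset\neq S^*\subsetneq I$, and connectivity of the island in $G_B=G_A$ gives $U_B(S^*,I\backslash S^*)=\infty$, contradicting $U_B(J,I\backslash J)=0$. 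This is precisely the ``localize the oscillation at a fixed agent'' move you said was needed; note that it resolves the multi-class difficulty automatically, because the contradiction is with pointwise convergence, not with consensus. Your version could be repaired without the $\ell_1$-approximation by propagating per-agent bounds (values outside the jet stay within $\delta_N$ of $0$, inside within $\delta_N$ of $1$, with $\delta_N<1/2$ the tail of $U(J,\bar J)$), then invoking non-stabilization to produce an agent whose value oscillates across the gap; but as written, the proposal stops short of a proof.
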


\begin{proof}
  Assume that, on the contrary, $\{ A(t) \}$ is class-ergodic, yet some proper jet $J$, in an island $I$ of $\{ A(t) \}$, is such that $U_A(J, I \backslash J)$ is bounded. Recall, from Definition \ref{DS-jet-def}, that by a proper jet in $I$, we mean $\emptyset \neq J(t) \subsetneq I$, $\forall t \geq 0$. Since $U_A(J, I \backslash J)$ is bounded and $I$ is an island of $\{ A(t) \}$, we conclude that $U_A(J, \mathcal{V} \backslash J)$ is bounded as well. Recalling the definition of $l_1$-approximation from \cite{Touri:10b}, a chain $\{B(t)\}$ is an $l_1$-approximation of chain $\{A(t)\}$ if:
  \begin{equation}
    \sum_{t=0}^{\infty} \| A(t)-B(t) \| < \infty,
  \end{equation}
  where for convenience only, the norm refers to the \textit{max norm}, i.e., the maximum of the absolute values of the matrix elements. We now form chain $\{ B(t) \}$, an $l_1$-approximation of chain $\{ A(t) \}$, by eliminating interactions between $J$ and $\mathcal{V}\backslash J$ at all times. From \cite[Lemma 1]{Touri:10b}, it is known that $l_1$-approximations do not influence the ergodic classes of a chain. Therefore, $\{ B(t) \}$ will remain class-ergodic with the same ergodic classes as $\{A(t)\}$. Also, the islands of B(t) are the same as those of A(t). On the other hand, $U_B(J, \mathcal{V} \backslash J)=0$. Given two distinct arbitrary constants, $\alpha_1$ and $\alpha_2$, let states of a multi-agent system, $y_i(t)$, $i \in \mathcal{V}$, evolve via dynamics $y(t+1) = B(t) y(t)$, $\forall t \geq 0$, and be initialized at: $y_i(0) = \alpha_1$ if $i \in J(0)$, and $y_i(0) = \alpha_2$ otherwise. Since there is no interaction between $J$ and $\mathcal{V} \backslash J$ at any time, we conclude that for every $t \geq 0$, we have: $y_i(t) = \alpha_1$ if $i \in J(t)$, and $y_i(t) = \alpha_2$ otherwise. Since $\{ B(t) \}$ is class-ergodic, $\lim_{t \rightarrow \infty}y_i(t)$ exists for every $i \in \mathcal{V}$ and the consensual agents can be grouped into clusters sharing the same limit and forming an ergodic class. Since the elements in $\{J(t)\}$ are always associated with the same value of $y$ for any $t$, they will asymptotically belong to a fixed limiting cluster $S^*$ , namely agents for which $y_i(t)$ converges to $\alpha_1$. Since $J$ is a proper jet in $I$, we have: $\emptyset \neq S^* \subsetneq I$. Consider, now, jet $S^*$ on island $I$. $S^*$ is essentially the limiting jet $J^*$ of $J$. Since the island structure is common for chains $\{A(t)\}$ and $\{B(t)\}$, we know that $U_B(J^*,I \backslash J^*)$ is unbounded. This is in contradiction with $U_B(J, I \backslash J) \leq U_B(J, \mathcal{V} \backslash J)=0$, which completes the proof.
\end{proof}

Later in this paper, we shall establish the sufficiency of the infinite jet-flow property in Theorem \ref{IJFP-over-islands}, provided $\{ A(t) \}$ is in Class $\mathcal{P}^*$, as defined in \cite{Touri:11c}. We now note that the infinite flow property of $\{ A(t) \}$, which is a necessary condition for ergodicity of $\{ A(t) \}$ according to \cite{Touri:10a}, is equivalent to the existence of a single island. Thus, Theorem \ref{IJFP-over-islands} immediately results in the following corollary which is a necessary condition for ergodicity of chain $\{A(t)\}$ of stochastic matrices.

\begin{corollary}
  A chain $\{ A(t) \}$ of stochastic matrices is ergodic only if it has the infinite jet-flow property.
\label{IJFP-necessary}
\end{corollary}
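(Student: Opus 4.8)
The plan is to obtain this as a consequence of Theorem~\ref{IJFP-over-islands}. Ergodicity is precisely the special case of class-ergodicity in which there is a single ergodic class comprising all of $M$; in particular an ergodic chain is class-ergodic, so Theorem~\ref{IJFP-over-islands} already guarantees the infinite jet-flow property over each island of $\{A(n)\}$. The only gap between this and the full infinite jet-flow property (which quantifies over \emph{every} proper jet $J$ in $M$, demanding that $U(J,\bar{J})$ be unbounded) is that I must rule out the possibility that $M$ splits into two or more islands. Hence the heart of the argument is to show that ergodicity forces $M$ to be a single island.

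To establish this, I would argue by contradiction, reusing the machinery of the proof of Theorem~\ref{IJFP-over-islands}. Suppose $\{A(n)\}$ is ergodic but $M$ decomposes into at least two islands, and let $I_1$ be one of them. By the definition of the infinite flow graph $G_A$, every interaction crossing between distinct islands is finitely summable, so for the constant jet $J = I_1$ the quantity $U(I_1,\bar{I_1})$ is finite. I would then form an $l_1$-approximation $\{B(n)\}$ of $\{A(n)\}$ by deleting all interactions between $I_1$ and its complement; since the deleted total mass is finite this is a genuine $l_1$-approximation, and by \cite[Lemma~1]{Touri:10b} it preserves the ergodic classes, so $\{B(n)\}$ remains ergodic. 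Now $U_B(I_1,\bar{I_1}) = 0$, so initializing $Y_i(0) = \alpha_1$ for $i \in I_1$ and $Y_i(0) = \alpha_2$ (with $\alpha_1 \neq \alpha_2$) otherwise and running $Y(n+1) = B(n)Y(n)$ keeps $Y_i(n) = \alpha_1$ on $I_1$ and $\alpha_2$ on its complement for all $n$, because no mass ever crosses the boundary. Both values thus persist forever and $\{Y(n)\}$ never reaches consensus, contradicting the ergodicity of $\{B(n)\}$. Therefore $M$ is a single island.

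With $M$ shown to be the unique island, Theorem~\ref{IJFP-over-islands} applied to this island yields that $U(J, M\backslash J)$ is unbounded for every proper jet $J$ in $M$, which is exactly the infinite jet-flow property. I expect the main obstacle to be making the single-island reduction airtight: one must check that zeroing the crossing entries and redistributing their (finite) total mass within each block keeps every $B(n)$ stochastic and constitutes a valid $l_1$-approximation, and that the separated dynamics $Y_i(n) \in \{\alpha_1,\alpha_2\}$ is genuinely preserved. Since this is the same bookkeeping already carried out in the proof of Theorem~\ref{IJFP-over-islands}, an equally clean alternative is to skip the island step entirely and run that contradiction argument verbatim for an arbitrary proper jet $J$ with $U(J,\bar{J})$ bounded, the single-cluster structure of an ergodic chain immediately furnishing the contradiction.
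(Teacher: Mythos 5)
Your proposal is correct and takes essentially the same route as the paper: the paper's own proof likewise reduces the corollary to Theorem~\ref{IJFP-over-islands} by noting that ergodicity forces a single island, citing \cite{Touri:10a} for the fact that the infinite flow property (i.e., connectedness of the infinite flow graph) is necessary for ergodicity. The only difference is that where the paper invokes that citation, you re-derive the single-island step inline via the $l_1$-approximation contradiction already used in Theorem~\ref{IJFP-over-islands}, which is sound (as is your alternative of running that contradiction verbatim on an arbitrary proper jet).
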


Corollary \ref{IJFP-necessary} provides a more restrictive necessary condition for ergodicity of a chain than Theorems 1 and 2 of \cite{Touri:3}. For instance, from Corollary \ref{IJFP-necessary}, we conclude that the chain of Example \ref{example without IJFP} is not ergodic since it does not have the infinite jet-flow property. However, this cannot be concluded from Theorem 1 and 2 of \cite{Touri:3}.

On the other hand, we notice that the infinite jet-flow property is not sufficient for ergodicity. For instance, one can verify that the chain of Example \ref{example with IJFP} is not ergodic while the infinite jet-flow property holds.

\begin{definition}
  A jet $J$ in $\mathcal{V}$ is called an \textit{independent jet} if the total influence of $\bar{J}$ on $J$ is finite over the infinite time interval, i.e.,
	\begin{equation}
	  \sum_{t=0}^{\infty}\sum_{i \in J(t+1)}\sum_{j \in \bar{J}(t)} a_{ij}(t) < \infty.
	\end{equation}
\label{leader-def}
\end{definition}

The following theorem, which is a generalization of Corollary \ref{IJFP-necessary}, states yet another necessary condition for ergodicity of chain $\{A(t)\}$ of stochastic matrices.

\begin{theorem}
  A chain $\{ A(t) \}$ of stochastic matrices is ergodic only if no two disjoint independent jets in $\mathcal{V}$ exist.
\label{no-two-leaders-necessary}
\end{theorem}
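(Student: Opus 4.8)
The plan is to argue by contradiction, adapting the $l_1$-approximation technique used in the proof of Theorem \ref{IJFP-over-islands}. Suppose $\{A(n)\}$ is ergodic, yet there exist two disjoint leaders $J^1$ and $J^2$ in $M$; being disjoint and nonempty, both are automatically proper jets, so $J^1(n)\neq\emptyset$ and $J^2(n)\neq\emptyset$ for every $n$. The idea is to build a single chain $\{B(n)\}$ that is an $l_1$-approximation of $\{A(n)\}$ and in which neither leader ever receives any influence from its complement, and then to exhibit an initialization under which the two leaders stay frozen at two distinct values, contradicting ergodicity.

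To construct $\{B(n)\}$, for each $i\in J^1(n+1)$ I would keep $B_{ij}(n)=A_{ij}(n)$ for $j\in J^1(n)$, set $B_{ij}(n)=0$ for $j\in M\backslash J^1(n)$, and add the removed mass $\sum_{j\in M\backslash J^1(n)}A_{ij}(n)$ onto a single within-leader entry $B_{ij_0}(n)$ with $j_0\in J^1(n)$ (possible since $J^1(n)\neq\emptyset$); I would modify the rows indexed by $J^2(n+1)$ in the same way and leave all other rows unchanged. Because $J^1(n+1)$ and $J^2(n+1)$ are disjoint, the two modifications never touch a common row, so $\{B(n)\}$ is well defined, row-stochastic, and nonnegative. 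The total perturbation is then bounded by
\begin{equation}
  \sum_{n=0}^{\infty}\sum_{i,j\in M}|A_{ij}(n)-B_{ij}(n)| \;\leq\; 2\sum_{n=0}^{\infty}\sum_{k=1}^{2}\sum_{i\in J^k(n+1)}\sum_{j\in M\backslash J^k(n)} A_{ij}(n),
\end{equation}
which is finite precisely because $J^1$ and $J^2$ are leaders; hence $\{B(n)\}$ is an $l_1$-approximation of $\{A(n)\}$, and by \cite[Lemma 1]{Touri:10b} it inherits the (single) ergodic class of $\{A(n)\}$, i.e.\ it remains ergodic.

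Finally I would run the dynamics $Y(n+1)=B(n)Y(n)$ with two distinct constants $\alpha_1\neq\alpha_2$, initializing $Y_i(0)=\alpha_1$ for $i\in J^1(0)$, $Y_i(0)=\alpha_2$ for $i\in J^2(0)$, and arbitrarily elsewhere. Since in $\{B(n)\}$ every $i\in J^1(n+1)$ is a convex combination supported on $J^1(n)$ alone, a straightforward induction gives $Y_i(n)=\alpha_1$ for all $i\in J^1(n)$, and symmetrically $Y_i(n)=\alpha_2$ for all $i\in J^2(n)$, for every $n$. As both jets are nonempty at every time, $\max_i Y_i(n)-\min_i Y_i(n)\geq|\alpha_1-\alpha_2|>0$ for all $n$, so the states cannot reach a common limit, contradicting the ergodicity of $\{B(n)\}$. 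I expect the main obstacle to be the construction step: verifying that one $l_1$-approximation can simultaneously decouple both leaders from their complements while staying stochastic and summable. The disjointness of $J^1$ and $J^2$ is exactly what prevents the two row-modifications from interfering, and the leader property is exactly what makes the perturbation summable; the freezing induction and the final non-consensus contradiction are then routine.
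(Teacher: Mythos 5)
Your proposal is correct and follows essentially the same route as the paper: assume two disjoint leaders exist, form an $l_1$-approximation $\{B(n)\}$ that eliminates the influence of $\bar{J}^s$ on $J^s$ for $s=1,2$, invoke \cite[Lemma 1]{Touri:10b} to preserve ergodicity, and derive a contradiction by freezing the two leaders at distinct values $\alpha_1\neq\alpha_2$. The only difference is that you spell out the mass-redistribution construction of $\{B(n)\}$ and the summability bound explicitly, which the paper leaves implicit; this is a welcome but not substantively different elaboration.
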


\begin{proof}
  Assume that on the contrary, there exist two disjoint independent jets $J^1$ and $J^2$ in $\mathcal{V}$. Similar to the proof of Theorem \ref{IJFP-over-islands}, form chain $\{ B(t) \}$, an $l_1$-approximation of $\{ A(t) \}$, by eliminating the influence of $\bar{J}^s$ on $J^s$, $s=1,2$, at all times. Recall that $\{ A(t) \}$ and $\{ B(t) \}$ will share the same ergodicity properties. Let states of a multi-agent system, $y_i(t)$, $1 \leq i \leq N$, evolve via dynamics $y(t+1) = B(t) y(t)$, $\forall t \geq 0$, and be initialized such that for every $i \in J^s(0)$ ($s=1,2$), $y_i(0) = \alpha_s$, where $\alpha_1 \neq \alpha_2$. Then, for every $t \geq 0$, we have: $y_i(t) = \alpha_s$, $\forall i \in J^s(t)$ ($s=1,2$). Since $\alpha_1 \neq \alpha_2$, consensus does not occur. Consequently, chain $\{ B(t) \}$ and thus $\{ A(t) \}$ could not possibly be ergodic.
\end{proof}

As an example, for chain $\{ A(t) \}$ of Example \ref{example with IJFP}, jet $\{ 1 \}$ and jet $\{ 3 \}$ are two disjoint independent jets in $\mathcal{V}=\{ 1,2,3 \}$. Thus, Theorem \ref{no-two-leaders-necessary} implies that $\{ A(t) \}$ is not ergodic.

\begin{remark}
	The following argument explains why Theorem \ref{no-two-leaders-necessary} generalizes Corollary \ref{IJFP-necessary}. Without the infinite jet-flow property, there exists a jet $J$ such that $U(J,\mathcal{V}' \backslash J)$ is bounded. Thus, both jets $J$ and $\mathcal{V} \backslash J$ are independent jets. On the other hand, jet $J$ and $\mathcal{V} \backslash J$ are disjoint. Thus, infinite jet-flow is a weaker condition than the non-existence of any two disjoint independent jets.
\end{remark}

\section{Relationship to the D-S Theorem}
\label{D-S Theorem}
Consider a multi-agent system with states evolving according to linear algorithm (\ref{DS-modelss}), where $\{ A(t) \}$ is a chain of stochastic matrices. Based on the work of Kolmogorov in \cite{Kolmo:36}, we know that for every chain $\{ A(t) \}_{t \geq 0}$, there exists a sequence $\{\pi(t)\}_{t \geq 0}$ of probability distribution vectors, called \textit{absolute probability sequence}, such that
\begin{equation}
  \pi'(t+1)A(t) = \pi'(t), \forall t \geq 0.
\end{equation}
The transition chain $\{ P(t) \}$ of the forward propagating chain associated with $\{ A(t) \}$ and $\{ \pi(t) \}$ as in (\ref{f-b}), must be such that:
\begin{equation}
  \pi_i(t) p_{ij}(t) = \pi_j(t+1) a_{ji}(t), \forall i,j \in \mathcal{V}, \forall t \geq 0.
\end{equation}
More specifically, if $\pi_i(t) \neq 0$, then:
\begin{equation}
  p_{ij}(t) = \pi_j(t+1) a_{ji}(t) / \pi_i(t),
\end{equation}
while if $\pi_i(t) = 0$, for some $i$ and $t \geq 0$, we choose $p_{ij}(t)$'s non-negative, arbitrarily such that:
\begin{equation}
  \sum_{j=1}^N p_{ij}(t) = 1.
\label{DS-sto}
\end{equation}
Note that in the former case ($\pi_i(t) \neq 0$), (\ref{DS-sto}) is automatically satisfied, implying that $P(t)$ is a stochastic matrix for every $t \geq 0$. It is easy to see that:
\begin{equation}
  \pi'(t) P(t) = \pi'(t+1), \forall t \geq 0.
\end{equation}
Thus, $\{ \pi(t) \}$ forms the probability distribution vector of an inhomogeneous forward propagating Markov chain. Let $V(J^s,J^k)$ denote the total \textit{flow} between two arbitrary jets $J^s$ and $J^k$ in $\mathcal{V}$ over the infinite time interval as defined by:
\begin{equation}
  \begin{array}{ll}
    \hspace{-.13in} V(J^s,J^k) = \sum_{t=0}^{\infty} & \hspace{-.1in} \Big[\sum_{i \in J^k(t)}\sum_{j \in J^s(t+1)} r_{ij}(t)\\ & + \sum_{i \in J^s(t)}\sum_{j \in J^k(t+1)} r_{ij}(t) \Big],
	\end{array}
\label{V}
\end{equation}
where
\begin{equation}
  r_{ij}(t) = \pi_i(t)p_{ij}(t) = \pi_j(t+1)a_{ji}(t).
\end{equation}
Value $r_{ij}(t)$ can be interpreted as the absolute joint probability of being in $i$ at time $t$ and $j$ at time $t+1$. Recalling $U$ from (\ref{U}), we note that for every $J^s,J^k$ in $\mathcal{V}$, $V(J^s,J^k) \leq U(J^s,J^k)$. Sonin, in his elegant work \cite{Sonin:08}, characterizes the limiting behavior of the two sequences $\{ \pi(t) \}$ and $\{ x(t) \}$ (evolving via (\ref{DS-modelss})) in the so-called D-S Theorem as the following.
\begin{theorem} \textit{(Sonin's D-S Theorem)}
There exists an integer $c$, $1 \leq c \leq N$, and a decomposition of $\mathcal{V}$ into jet-partition $(J^0,J^1,\ldots,J^c)$, $J^k = \{ J^k(t) \}$, $0 \leq k \leq c$, such that irrespective of the particular time or state at which $x_i$'s are initialized,
\begin{enumerate}[(i)]
  \item For every $k$, $1 \leq k \leq c$, there exist constants $\pi^*_k$ and $x^*_k$, such that
	\begin{equation}
	  \lim_{t \rightarrow \infty} \sum_{i \in J^k(t)} \pi_i(t)=\pi^*_k,
	\end{equation}
	and
	\begin{equation}
	  \lim_{t \rightarrow \infty} x_{i_t}(t)=x^*_k,
	\end{equation}
	for every sequence $\{ i_t \}$, $i_t \in J^k(t)$. Furthermore, $\lim_{t \rightarrow \infty}\sum_{i \in J^0(t)} \pi_i(t) = 0$.
	\item For every distinct $k,s$, $0 \leq k,s \leq c$: $V(J^k,J^s) < \infty$.
	\item This decomposition is unique up to jets $\{ J(t) \}$ such that for any $\{ \pi(t) \}$ we have:
	\begin{equation}
	  \lim_{t \rightarrow \infty}\sum_{i \in J(t)} \pi_i(t) = 0 \text{ and } V(J,\mathcal{V}\backslash J) < \infty.
	\end{equation}
\end{enumerate}
\end{theorem}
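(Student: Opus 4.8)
The plan is to prove the theorem probabilistically, exploiting the forward chain $\{\xi(n)\}$ with transition matrices $\{P(n)\}$ constructed above, whose one-dimensional marginals are precisely $\{\pi(n)\}$. First I would record two elementary facts about any solution of (\ref{modelss}). Since each $A(n)$ is stochastic, $X_i(n+1)$ is a convex combination of the $X_j(n)$, so $\max_i X_i(n)$ is non-increasing while $\min_i X_i(n)$ is non-decreasing; both therefore converge and it suffices to treat bounded $X$. Moreover, the absolute probability property gives the conservation law $\sum_{i \in M} \pi_i(n) X_i(n) = \sum_{i \in M} \pi_i(0) X_i(0)$ for all $n$, which will later pin down the constants $X^*_k$.

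Next I would set up the reverse martingale that drives the whole argument. Reading $\pi_i(n) P_{ij}(n) = \pi_j(n+1) A_{ji}(n)$ as a statement about the reversed chain gives $\Pr(\xi(n) = i \mid \xi(n+1) = j) = A_{ji}(n)$, so that (\ref{modelss}) rewrites as $X_j(n+1) = E[X_{\xi(n)}(n) \mid \xi(n+1) = j]$. Putting $Y_n := X_{\xi(n)}(n)$ and $\mathcal{F}_n := \sigma(\xi(m) : m \geq n)$, a filtration decreasing in $n$, the Markov property yields $E[Y_n \mid \mathcal{F}_{n+1}] = Y_{n+1}$; thus $\{Y_n\}$ is a bounded reverse martingale and, by the reverse-martingale convergence theorem, $Y_n \to Y_\infty$ almost surely and in $L^1$, with $Y_\infty$ measurable with respect to the tail field $\mathcal{F}_\infty := \bigcap_{n} \mathcal{F}_n$. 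Crucially, this construction depends only on $\{A(n)\}$ and $\{\pi(n)\}$, not on the initialization of $X$.

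With the resulting limit map $X(0) \mapsto Y_\infty$ in hand---a linear map from $\mathbb{R}^N$ into $L^1(\mathcal{F}_\infty)$, hence of rank $c \leq N$---I would read off the jet-partition. One shows its image is spanned by indicators $\mathbf{1}_{G_1}, \ldots, \mathbf{1}_{G_c}$ of essential ergodic components $G_k \in \mathcal{F}_\infty$ (this identification, together with the finiteness of their number, being the structural heart of the construction, and where the order structure from the max/min contraction is used to force $Y_\infty$ to be a step function). Localizing each $G_k$ in time by $J^k(n) := \{ i : \Pr(G_k \mid \xi(n) = i) \text{ is largest} \}$, assigning the remainder to $J^0(n)$, and resolving ties yields an exact jet-partition $(J^0(n), \ldots, J^c(n))$ of $M$ for every $n$. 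For this choice $\sum_{i \in J^k(n)} \pi_i(n) = \Pr(\xi(n) \in J^k(n)) \to \Pr(G_k) =: \pi^*_k$, and $\Pr(G_0) = 0$ gives the vanishing of the $J^0$-mass (whence $c \geq 1$, since the total mass is $1$); feeding any selection $i_n \in J^k(n)$ into the $L^1$ convergence above forces $X_{i_n}(n) \to X^*_k$, the constant value of $Y_\infty$ on $G_k$, which the conservation law identifies explicitly. This would establish part (i).

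It remains to prove the separation statement (ii) and the uniqueness (iii), and this is where I expect the real work to lie. Observing that $V(J^k, J^s)$ is exactly the expected number of transitions of $\xi$ across the boundary between the two jets, proving $V(J^k, J^s) < \infty$ for $k \neq s$ amounts to showing $\sum_{n} \Pr(\xi \text{ crosses between } J^k \text{ and } J^s \text{ at time } n) < \infty$. The soft almost-sure convergence from the reverse martingale only gives finitely many crossings per trajectory; upgrading this to finite expectation---and, by the same token, upgrading the $L^1$ convergence in (i) to convergence along \emph{every} selection $i_n \in J^k(n)$---requires quantitative control of how fast the conditional tail probabilities $\Pr(G_k \mid \mathcal{F}_n)$ polarize toward $\{0,1\}$, via an upcrossing or Borel--Cantelli estimate on these bounded martingales. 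Once the cross-flow is summable, (iii) follows by comparing two admissible decompositions: a jet with positive limiting $\pi$-mass and finite outflow $V(J, M \backslash J)$ must, by the atomicity supplied by the ergodic components, agree with one of the $G_k$ up to a jet whose $\pi$-mass vanishes and whose flow to its complement is finite, which is precisely the asserted ambiguity. The principal obstacle is thus twofold: pinning down the finite ergodic decomposition of the tail field so that $c \leq N$ and the jets are well-defined, and passing from this qualitative almost-sure convergence to the quantitative summability that underlies the separation conclusion.
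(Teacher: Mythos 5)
There is nothing in the paper to compare your attempt against: the statement is quoted verbatim as Sonin's Decomposition--Separation Theorem and imported from \cite{Sonin:08} without proof --- the paper's contribution is to \emph{apply} it (via the absolute probability sequence of Kolmogorov and the associated forward chain, exactly the setup you reconstruct in your first two paragraphs), not to prove it. So your proposal must be judged on its own merits, and as it stands it is an outline with the theorem's actual substance left open. Your reduction to the bounded reverse martingale $Y_n = X_{\xi(n)}(n)$ with respect to the decreasing filtration $\mathcal{F}_n = \sigma(\xi(m): m \geq n)$ is sound and is indeed the classical Blackwell--Cohn route (it is also how Touri and Nedi\'c exploit absolute probability sequences in \cite{Touri:3}), and it plausibly yields a version of part (i). But the two steps you yourself flag as ``the structural heart'' and ``where the real work lies'' are precisely what distinguishes the D--S theorem from soft martingale convergence: (a) that the tail $\sigma$-field decomposes into at most $N$ atoms $G_1,\ldots,G_c$ on which $Y_\infty$ is constant, with the jets recoverable as time-localized versions $J^k(n)$ in such a way that convergence holds along \emph{every} selection $i_n \in J^k(n)$ (pointwise in the state, not merely a.s.\ or in $L^1$); and (b) the separation estimate $V(J^k,J^s) < \infty$. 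Almost-sure finiteness of boundary crossings per trajectory does not imply summability of $\sum_n \Pr(\text{crossing at time } n)$, so your Borel--Cantelli remark does not close (b); the standard quantitative tool would be Doob's upcrossing inequality applied to the \emph{forward} bounded martingale $M_n = \Pr(G_k \mid \xi(0),\ldots,\xi(n)) = \Pr(G_k \mid \xi(n))$ (the identity holding by the Markov property, since $G_k$ is a tail event), which bounds the expected number of transitions between polarized regions --- but you would still have to show the mass of non-polarized states either vanishes (absorbed into $J^0$) or stabilizes, which is the delicate combinatorial part of Sonin's construction.

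A further gap sits in part (iii): as stated, uniqueness is required to hold ``for any $\{\pi(n)\}$,'' i.e.\ the jet-partition is canonical across all Markov measures consistent with $\{P(n)\}$, whereas your construction fixes a single absolute probability sequence and builds $G_k$, $J^k(n)$ from that one measure. Showing that the resulting decomposition is independent of the choice (up to jets of vanishing mass and finite flow) is additional work your sketch does not address. In short: correct framework, credible plan for (i), but (ii) and (iii) --- the content that the paper actually relies on, e.g.\ in the proof of its Theorem 4 where $V(J^k, M \backslash J^k) < \infty$ is invoked to derive contradictions --- remain unproven in your proposal.
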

We shall take advantage of the Sonin's D-S Theorem to characterize the asymptotic behavior of a class of chains of stochastic matrices in the following section.


\section{Convergence in Class $\mathcal{P}^*$}
\label{Touri}

In this section, we apply Sonin's D-S Theorem to chains in class $\mathcal{P}^*$ as first defined in \cite{Touri:11c}.

\begin{definition}\cite[Definition 3]{Touri:11c}
	Chain $\{ A(t) \}$ is said to be in class $\mathcal{P}^*$ if it admits an absolute probability sequence uniformly bounded away from zero, i.e., there exists $p^* > 0$ such that
\begin{equation}
  \pi_i(t) \geq p^*, \forall i \in \mathcal{V}, \forall t \geq 0.
\end{equation}
\end{definition}

For chains in Class $\mathcal{P}^*$, it is immediately implied that in the jet decomposition of the D-S Theorem, there is no jet $J^0$. Otherwise, $\lim_{t \rightarrow \infty}\sum_{i \in J^0(t)} \pi_i(t)$ would be bounded away from zero by at least $p^*$, which is in contradiction with the D-S Theorem. Therefore, there is a jet-partition of $\mathcal{V}$ into jets $J^1,\ldots,J^c$, such that for every $k=1,\ldots,c$, $\lim_{t \rightarrow \infty} x_{i_t}(t) =x^*_k$, for every sequence $\{ i_t \}$, where $i_t \in J^k(t)$. Thus, we have the following proposition for chains in Class $\mathcal{P}^*$.
\begin{proposition}
  Consider a multi-agent system with dynamics (\ref{DS-modelss}), where chain $\{ A(t) \}$ is in Class $\mathcal{P}^*$. Then, the set of accumulation points of states is finite.
\label{acc-points-finite}
\end{proposition}
\begin{proof}
  Obvious if we note that $\{ x^*_k | 1 \leq k \leq c \}$ form the set of accumulation points of states.
\end{proof}
\begin{lemma}
  If $\{ A(t) \} \in \mathcal{P}^*$, then for every two jets $J^1$ and $J^2$ in $\mathcal{V}$, $V(J^1,J^2)=\infty$ if and only if $U(J^1,J^2)=\infty$.
\label{infinite V-U}
\end{lemma}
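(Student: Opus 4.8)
The plan is to prove the stronger quantitative statement that $V(J^1,J^2)$ and $U(J^1,J^2)$ are comparable up to the fixed constant $p^*$, from which the claimed equivalence of their divergence is immediate. The guiding observation is that $V$ is nothing more than a $\pi$-weighted copy of $U$, summand by summand.

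First I would rewrite $V(J^1,J^2)$ in Eq.~(\ref{V}) using the identity $r_{ij}(n)=\pi_j(n+1)A_{ji}(n)$ so as to eliminate the forward chain $\{P(n)\}$ and re-express every summand through the entries of $A$. After relabeling the dummy indices, a careful comparison with Eq.~(\ref{U}) shows that $V(J^1,J^2)$ consists of exactly the same terms as $U(J^1,J^2)$, except that each contribution $A_{ab}(n)$ is multiplied by the weight $\pi_a(n+1)$, where $a$ is the row index of $A$ at time $n+1$ (lying in $J^1(n+1)$ for the first inner sum and in $J^2(n+1)$ for the second). The only delicate point of the whole argument is this index matching: one must verify that the weight is $\pi$ evaluated at the \emph{row} index at time $n+1$, and not at the column index at time $n$, so that a single uniform lower bound will apply across all terms.

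Next I would invoke the defining property of class $\mathcal{P}^*$: since $\{\pi(n)\}$ is a sequence of probability vectors bounded below by $p^*>0$, every weight satisfies $p^*\le\pi_a(n+1)\le 1$. Applying these bounds termwise to the two series yields the sandwich
\[
  p^*\,U(J^1,J^2)\;\le\;V(J^1,J^2)\;\le\;U(J^1,J^2),
\]
where the right-hand inequality simply recovers the bound $V\le U$ already noted before Sonin's theorem. Because $p^*>0$ is a fixed positive constant, one side of this double inequality is infinite precisely when the other is, which proves $V(J^1,J^2)=\infty$ if and only if $U(J^1,J^2)=\infty$. Beyond the bookkeeping in the first step, there is no genuine obstacle: the termwise comparison together with the $\mathcal{P}^*$ bounds closes the argument at once.
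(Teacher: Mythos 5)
Your proof is correct and is essentially the paper's own argument: the paper's one-line proof rests on exactly the sandwich $p^*\,U(J^1,J^2) \leq V(J^1,J^2) \leq U(J^1,J^2)$, which you derive in full detail via the identity $r_{ij}(n)=\pi_j(n+1)A_{ji}(n)$ and the $\mathcal{P}^*$ bounds $p^*\le\pi_a(n+1)\le 1$. Your index-matching verification (weight at the row index at time $n+1$) is accurate and simply makes explicit what the paper leaves as ``obvious.''
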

\begin{proof}
  The result is obvious if one notes that
	\begin{equation}
	  p^* U(J^1,J^2) \leq V(J^1,J^2) \leq U(J^1,J^2).
	\end{equation}
\end{proof}

\begin{theorem}
  A chain $\{ A(t) \}$ in Class $\mathcal{P}^*$ is class-ergodic if and only if the infinite jet-flow property holds over each island of $\{ A(t) \}$. In case of class-ergodicity of $\{ A(t)\}$, islands are the ergodic classes of $\{ A(t) \}$, and constitute the jet limits in the jet decomposition of $\{A(t)\}$. Moreover, these limits are attained in finite time.
\label{IJFP-equivalent}
\end{theorem}

\begin{proof}
  We first assume that chain $\{A(t)\}$ in $\mathcal{P}^*$ is class-ergodic. Then, Theorem \ref{IJFP-over-islands} implies that the infinite jet-flow property holds over each island of the chain. We now show that if $\{ A(t) \} \in \mathcal{P}^*$ is class-ergodic, islands are the ergodic classes of $\{ A(t) \}$. Let us call an agent $i \in \mathcal{V}$, a \textit{prime member} of jet $J^k$ if $i \in J^k(t)$ for infinitely many times. Having defined the prime membership, there exists some Sonin's jet-decomposition of $\{ A(t) \}$ such that each agent becomes the prime member of a unique jet. To obtain such a jet-decomposition, start with an arbitrary jet-decomposition and let any two jets with a common prime member merge. The merging process results in a Sonin's jet-decomposition with the desired property. Jets of such decomposition have the property that they become time-invariant after a finite time. Thus, the jet-limits exist for each jet and are ergodicity classes of $\{ A(t) \}$. If $i$ and $j$ belong to the same jet-limit, they are in the same island since they are in the same ergodic class of $\{ A(t) \}$ (\cite{Touri:10b}, Lemma 2). Conversely, assume that $i$ and $j$ are neighbors in the infinite flow graph, i.e., $\sum_{t=0}^{\infty} (a_{ij}(t)+a_{ji}(t))=\infty$. If $i$ and $j$ were to belong to different jet-limits $J^{s^*},J^{k^*}$, then $U(J^s,J^k)$ would be unbounded. Thus, based on Lemma \ref{infinite V-U}, $V(J^s,J^k)$ would be unbounded as well, which contradicts property (ii) in the D-S theorem. Therefore, every two neighbors in the infinite flow graph belong to the same jet-limit. Consequently, every $i$ and $j$ in the same island must be in the same jet-limit.
  
	To prove the sufficiency, we assume that the infinite jet-flow property holds over each island. Let $(J^1,\ldots,J^c)$ be a Sonin's jet-decomposition, and for every $k=1,\ldots,c$, $\lim_{t \rightarrow \infty} x_{i_t}(t) =x^*_k$ for every sequence $\{ i_t \}$, where $i_t \in J^k(t)$.	Let $I$ be an arbitrary island. We aim to show that, for every $i \in I$, $\lim_{t \rightarrow \infty}x_i(t)$ exists. To this aim, keeping in mind that the aim is achieved is one of jets $J^1,\ldots,J^c$ contains island $I$ after some finite time, we follow three steps. Pick an arbitrary jet $J^k$ among $J^1,\ldots,J^c$.
	
	\textit{Step 1:} We show that, infinitely often, we have: $I \cap J^k(t) = \emptyset$ or $I \cap J^k(t) = I$, where $\emptyset$ denotes the empty set. Indeed, assume instead that this behavior occurs only a finite number $r$ of times, denoted $t_1,\ldots,t_r$. We form a proper jet $J$ in $I$ such that:
	\begin{equation}
	  J(t) = I \cap J^k(t), \text{ if } t \neq t_i, 1 \leq i \leq r.
	\end{equation}
	Since the infinite jet-flow property holds over $I$, $U(J,I \backslash J)$ is unbounded. On the other hand, except for a finite number of time indices $t = t_i$, $1 \leq i \leq r$, $U_t(J,I \backslash J)\leq U_t(J^k, \mathcal{V} \backslash J^k)$. This implies that $U(J^k,\mathcal{V} \backslash J^k)$ is unbounded, and, according to Lemma \ref{infinite V-U}, so is $V(J^k, \mathcal{V} \backslash J^k)$. This is in contradiction with the D-S Theorem. Therefore, $I \cap J^k(t) = \emptyset$ or $I$ happens infinitely many times. This means that either one or both of the events $I \cap J^k(t) = \emptyset$ and $I \cap J^k(t) = I$ occurs infinitely often.
	  
	  \textit{Step 2:} We show that there are at most a finite number of times such that $I \subseteq J^k(t)$ and $I \not\subseteq J^k(t+1)$. Indeed, denote:
  \begin{equation}
  	\epsilon \triangleq \frac{1}{3}\min\{ |x^*_s-x^*_l|\, |1 \leq s \neq l \leq c \},
	\label{epsilon}
  \end{equation}
  there exists $T_{\epsilon}\geq 0$ such that:
  \begin{equation}
    |x_i(t) - x^*_l| < \epsilon, \forall l=1,\ldots,c, \forall i \in J^l(t), \forall t \geq T_{\epsilon}.
	\label{T_eps}
  \end{equation}
  For some given $t \geq T_{\epsilon}$ assume that: $I \subseteq J^k(t)$ and $I \not\subseteq J^k(t+1)$. Then, there exists $i \in I$ such that $i \in J^k(t) \backslash J^k(t+1)$. In view of (\ref{DS-modelss}), (\ref{epsilon}), and (\ref{T_eps}), we then have:
  \begin{equation}
    |\sum_{j \not\in J^k(t)} a_{ij}(t)(x_j(t)-x_i(t))| \geq \epsilon.
  \label{1110}
  \end{equation}
  On the other hand,
  \begin{equation}
	  \begin{array}{l}
      \hspace{-.5in}|\sum_{j\not\in J^k(t)} a_{ij}(t)(x_j(t)-x_i(t))|\vspace{.05in} \\ \hspace{.2in} \leq \sum_{j \not\in J^k(t)} a_{ij}(t) |x_j(t)-x_i(t)|\vspace{.05in} \\ \hspace{.2in} \leq L \sum_{j \not\in J^k(t)} a_{ij}(t),
	  \end{array}
  \label{1120}	
  \end{equation}
  where
  \begin{equation}
    L \triangleq \max\{ x_j(0)-x_i(0), | i,j \in \mathcal{V}\}.
	\label{DS-L}
  \end{equation}
	Note that $L$ remains an upper bound of $| x_j(t)-x_i(t) |$, $\forall t \geq 0$, since states are updated via a convex combination of previous states. Eqs. (\ref{1110}) and (\ref{1120}) imply:
  \begin{equation}
    \sum_{j \not\in J^k(t)} a_{ij}(t) \geq \epsilon / L.
  \end{equation}
	Therefore, since $i \in I$:
  \begin{equation}
    \sum_{l \in I}\sum_{j \not\in I} a_{lj}(t) \geq \sum_{j \not\in {I}} a_{ij}(t) \geq \sum_{j \not\in J^k(t)} a_{ij}(t) \geq \epsilon / L.
	\label{1130}
  \end{equation}
	Since $U(I, \mathcal{V} \backslash I)< \infty$, inequality (\ref{1130}) can only occur for finitely many times $t$.	This shows that if $I \subseteq J^k(t)$ happens infinite times, then there exists $T$ such that $I \subseteq J^k(t)$ for every $t \geq T$. Consequently, $\lim_{t \rightarrow \infty} x_i(t)$ exists, $\forall i \in I$, and is equal to $x^*_k$. Therefore, assume that for a fixed island $I$, $I \subseteq J^k(t)$ happens only a finite number of times for every $k$, $1 \leq k \leq c$. Thus, from the result of \textit{Step 1}, $I \cap J^k(t) = \emptyset$ must happen infinite times, for every $k$, $1 \leq k \leq c$.
	
	\textit{Step 3:} We show that if $I \cap J^k(t) = \emptyset$ happens infinite times, for every $k$, $1 \leq k \leq c$, then, the following contradiction occurs: For every $k$, $1 \leq k \leq c$, there exists $T_k \geq 0$ such that $I \cap J^k(t) = \emptyset$, $\forall t \geq T_k$. The proof is established by induction on $k$. With no loss of generality, assume that $x^*_1 < \cdots < x^*_k$. $(k=1)$: Recalling $\epsilon$ and $T_{\epsilon}$ from (\ref{epsilon}) and (\ref{T_eps}), assume that for a fixed $t \geq T_{\epsilon}$ we have $I \cap J^1(t) = \emptyset$ and $I \cap J^1(t+1) \neq \emptyset$. Thus, there exists $i \in I$ such that $i \in J^1(t+1) \backslash J^1(t)$. Therefore,
	\begin{equation}
	  \sum_{j \in J^1(t)}|a_{ij}(t) (x_j(t)-x_i(t))| \geq \epsilon.
	\end{equation}
	Noting that $J^1(t) \subseteq \mathcal{V}\backslash I$, by repeating steps (\ref{1110})-(\ref{1130}), we conclude that there are at most finitely many times at which $I \cap J^1(t) = \emptyset$ and $I \cap J^1(t+1) \neq \emptyset$. This together with the fact that $I \cap J^1(t) = \emptyset$ happens infinite times, shows that there exists $T_1 \geq 0$ such that $I \cap J^1(t) = \emptyset$, $\forall t \geq T_1$.\\ $k-1 \rightarrow k$ ($1 < k \leq c$): Assume that for a fixed $t \geq \max\{T_{l}|1 \leq l < k\}$, we have $I \cap J^{k}(t) = \emptyset$ and $I \cap J^k(t+1) \neq \emptyset$. Thus, there exists $i \in I$ such that $i \in J^k(t+1) \backslash J^k(t)$. Therefore,
	\begin{equation}
	  \sum_{j \in \bigcup_{l=1}^{k}J^l(t)}|a_{ij}(t) (x_j(t)-x_i(t))| \geq \epsilon.
	\end{equation}
	Once again, we note that $\bigcup_{l=1}^{k}J^l(t) \subseteq \bar{I}$, and repeat steps (\ref{1110})-(\ref{1130}) to show that there exists $T_k \geq 0$ such that $I \cap J^k(t) = \emptyset$, $\forall t \geq T_k$.
\end{proof}

\begin{corollary}
  A chain $\{ A(t) \} \in \mathcal{P}^*$ is ergodic if and only if it has the infinite jet-flow property.
\label{IJFP-ergodicity}
\end{corollary}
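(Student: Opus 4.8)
The plan is to read off the corollary from Theorem~\ref{IJFP-equivalent}, exploiting the fact that ergodicity is exactly class-ergodicity with a single ergodic class. The necessity direction needs nothing new: if $\{A(n)\}$ is ergodic then Corollary~\ref{IJFP-necessary} already supplies the infinite jet-flow property, and since that corollary holds for arbitrary chains it holds in particular when $\{A(n)\}\in\mathcal{P}^*$.

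For sufficiency, assume $\{A(n)\}\in\mathcal{P}^*$ possesses the infinite jet-flow property. The crucial step is to specialize the property to constant jets. For each proper subset $S\subsetneq M$ I would take $J$ to be the constant jet $S$, so that $U(S,\bar S)$ must be unbounded. Evaluating Eq.~(\ref{U}) at a constant jet gives $U(S,\bar S)=\sum_{n=0}^{\infty}\sum_{i\in S}\sum_{j\notin S}(A_{ij}(n)+A_{ji}(n))$; hence unboundedness for every such $S$ says precisely that every cut of the infinite flow graph carries infinite total weight. By Definition~\ref{island-def} together with the single-island characterization of the infinite flow property from~\cite{Touri:10a}, this forces $G_A$ to be connected, i.e.\ $\{A(n)\}$ has exactly one island, namely $M$ itself.

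With the island structure settled, the hypothesis of Theorem~\ref{IJFP-equivalent}---that the infinite jet-flow property hold over each island---collapses to the assumed property over $M$. Theorem~\ref{IJFP-equivalent} then yields class-ergodicity of $\{A(n)\}$ with the islands as its ergodic classes; since the only island is $M$, there is a single ergodic class and the chain is ergodic.

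The step I would check most carefully is the reduction from arbitrary jets to the infinite flow graph: I must confirm that $U(S,\bar S)$ for the constant jet $S$ coincides with the aggregate interaction across the cut $\{S,\bar S\}$ used to define the edge set $E$ in Definition~\ref{island-def}. This is immediate once Eq.~(\ref{U}) is written out for a constant jet, so I anticipate no real obstacle; the corollary is essentially a repackaging of Theorem~\ref{IJFP-equivalent} with the single-island reading of the infinite flow property.
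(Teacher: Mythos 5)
Your proposal is correct and follows the paper's intended route: the paper states this corollary without a separate proof precisely because it reads off from Theorem~\ref{IJFP-equivalent} in the way you describe --- necessity via Corollary~\ref{IJFP-necessary}, and sufficiency by noting that the infinite jet-flow property, specialized to constant jets, gives the infinite flow property and hence a single island (the equivalence the paper itself invokes from \cite{Touri:10a}), so that class-ergodicity with islands as ergodic classes collapses to ergodicity. Your verification that $U(S,\bar{S})$ for a constant jet coincides with the cut sum defining the edge set of $G_A$ is exactly the right check, and it goes through since a finite cut with infinite total weight must contain at least one pair $(i,j)$ with $\sum_{n}(A_{ij}(n)+A_{ji}(n))=\infty$.
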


Since convergence of states occurs inside each jet $J^k$, $1 \leq k \leq c$, for multiple consensus to occur unconditionally (class-ergodicity of $\{ A(t) \}$), it suffices that for each jet of the D-S Theorem jet decomposition, its jet-limit exists.


\section{Relationship to Previous Work}
\label{Previous Work}


\subsection{Weakly Aperiodic Chains in Class $\mathcal{P}^*$}
\label{weak-aperiodicity}
In this section of the paper, we see how the weak aperiodicity property, as defined in \cite{Touri:11c}, guarantees that the infinite jet-flow property holds over each island. In accordance with \cite{Touri:11c}, weak aperiodicity of a chain is defined as follows:
\begin{definition}
  A chain $\{ A(t) \}$ of stochastic matrices is said to be \textit{weakly aperiodic} if there exists $\gamma > 0$ such that for every distinct $i,j \in \mathcal{V}$ and each $t \geq 0$, there exists $l \in \mathcal{V}$ such that
	\begin{equation}
	  a_{li}(t).a_{lj}(t) \geq \gamma a_{ij}(t).
	  \label{w-ap}
	\end{equation}
\end{definition}
\begin{lemma}
  Let $\{ A(t) \}$ be a chain of stochastic matrices in Class $\mathcal{P}^*$ that is weakly aperiodic. Then, the infinite jet-flow property holds over each island of $\{ A(t) \}$. In particular, in presence of a single island, the infinite jet-flow property holds for chain $\{ A(t) \}$.
\label{weak-aperiodicity-IJFP}
\end{lemma}
\begin{proof}
Let $\{ A(t) \}$ be weakly aperiodic, $I$ be an arbitrary island of $\{ A(t) \}$, and $J$ be an arbitrary jet in $I$. If jet-limit $J^*$ exists, since $I$ is a connected component of the infinite flow graph, $U(J^*,I\backslash J^*)$ is unbounded. Consequently, $U(J,I\backslash J)$ is unbounded and the lemma holds. Thus instead, assume that for jet $J$, the jet-limit does not exist. Therefore, for infinitely many times $t$, we must have: $J(t+1) \not\subseteq J(t)$. Let $t$ be fixed and $J(t+1) \not\subseteq J(t)$. Thus, there exists $i \in J(t+1) \backslash J(t)$. From the weak aperiodicity property of $\{ A(t) \}$ (see (\ref{w-ap})), for every $j \in J(t)$, there exists $l \in \mathcal{V}$ such that:
\begin{equation}
  \begin{array}{ll}
    \gamma a_{ij}(t) \leq a_{li}(t).a_{lj}(t) \vspace{.05in} & \hspace{-.1in} \leq \min\{ a_{li}(t),a_{lj}(t) \} \\ & \hspace{-.1in} \leq U_t(J,\mathcal{V} \backslash J),
	\end{array}
\end{equation}
where $U_t$ is defined in (\ref{U_n}). The reason for the last inequality is that, whether $l \in J(t+1)$ or $l \not\in J(t+1)$, one of $a_{li}(t),a_{lj}(t)$ appears in $U_t(J, \mathcal{V} \backslash J)$. Hence,
\begin{equation}
  \sum_{j \in J(t)} \gamma a_{ij}(t) \leq |J(t)| U_t(J,\mathcal{V} \backslash J).
\label{2100}
\end{equation}
On the other hand,
\begin{equation}
  \begin{array}{ll}
    \sum_{j \in J(t)} \gamma a_{ij}(t) & \hspace{-.1in} = \gamma \sum_{j \in J(t)} a_{ij}(t) \vspace{.05in}\\ & \hspace{-.1in} = \gamma \left( 1 - \sum_{j \not\in J(t)} a_{ij}(t) \right) \vspace{.05in} \\ & \hspace{-.1in} \geq \gamma \left(1 - U_t(J,\mathcal{V} \backslash J)\right).
	\end{array}
\label{2110}
\end{equation}
Relations (\ref{2100}) and (\ref{2110}) imply:
\begin{equation}
  U_t(J,\mathcal{V} \backslash J) \geq \gamma /(\gamma + |J(t)|) > \gamma / (\gamma+N).
\label{2120}
\end{equation}
Since (\ref{2120}) holds for infinitely many times $t$, $U(J,\mathcal{V} \backslash J)=\sum_{t=0}^{\infty}U_t(J,\mathcal{V} \backslash J)$ is unbounded, and so is $U(J,I \backslash J)$ (since $J$ is a jet in $I$, and $I$ is an island).
\end{proof}

Theorem \ref{IJFP-equivalent} and Lemma \ref{weak-aperiodicity-IJFP} immediately imply the following corollary which is the deterministic counterpart of Theorem 4 of \cite{Touri:11c}.

\begin{corollary}
  Every weakly aperiodic chain in Class $\mathcal{P}^*$ is class-ergodic.
\end{corollary}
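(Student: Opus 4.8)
The plan is to derive this statement as a direct composition of the two preceding results, since the corollary carries no independent content of its own: all the substance resides in Lemma~\ref{weak-aperiodicity-IJFP} and Theorem~\ref{IJFP-equivalent}. Concretely, I would let $\{A(n)\}$ be a chain that is simultaneously weakly aperiodic and in class $\mathcal{P}^*$, and then chain the two implications along the common intermediate condition, namely the infinite jet-flow property holding over each island.

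First I would invoke Lemma~\ref{weak-aperiodicity-IJFP}, which requires only weak aperiodicity of $\{A(n)\}$ (and no $\mathcal{P}^*$ hypothesis), to conclude that the infinite jet-flow property holds over every island of $\{A(n)\}$. Second, I would feed this conclusion into Theorem~\ref{IJFP-equivalent}: that theorem asserts, for any chain in class $\mathcal{P}^*$, the \emph{equivalence} of class-ergodicity with the infinite jet-flow property over each island. Since our chain is assumed to be in $\mathcal{P}^*$, the forward (sufficiency) direction of that equivalence applies verbatim, and class-ergodicity follows. One should note in passing that the theorem additionally identifies the islands with the ergodic classes, so the conclusion is in fact slightly sharper than bare class-ergodicity, though only the latter is claimed here.

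The only point demanding any care — and it is a bookkeeping point rather than a genuine obstacle — is confirming that the hypotheses of the corollary supply exactly the inputs each prior result demands: weak aperiodicity is precisely the hypothesis of Lemma~\ref{weak-aperiodicity-IJFP}, while membership in $\mathcal{P}^*$ is precisely what licenses the sufficiency half of Theorem~\ref{IJFP-equivalent} (it is this half that relies on Lemma~\ref{infinite V-U} to transfer unboundedness between $U$ and $V$, and hence on $p^*>0$). Since the corollary assumes both properties at once, no gap arises, and the main conceptual work has already been discharged in establishing those two statements. I would therefore expect the proof itself to be a single-sentence concatenation, with the genuine difficulty located entirely upstream in the island-level flow estimates of Lemma~\ref{weak-aperiodicity-IJFP} and the D-S Theorem machinery underlying Theorem~\ref{IJFP-equivalent}.
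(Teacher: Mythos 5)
Your proposal is correct and follows exactly the paper's route: the paper states that the corollary is ``immediately implied'' by Lemma~\ref{weak-aperiodicity-IJFP} (weak aperiodicity gives the infinite jet-flow property over each island) combined with the sufficiency direction of Theorem~\ref{IJFP-equivalent} (which uses the $\mathcal{P}^*$ hypothesis). Your additional bookkeeping remarks---that the lemma needs no $\mathcal{P}^*$ assumption and that $\mathcal{P}^*$ enters only through Lemma~\ref{infinite V-U}---are accurate and consistent with the paper.
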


Note that an equivalent definition of weak periodicity is as follows.
\begin{definition}
  A chain $\{ A(t) \}$ of stochastic matrices is \textit{weakly aperiodic} if there exists $\gamma > 0$ such that for every distinct $i,j \in \mathcal{V}$ and each $t \geq 0$, there exists $l \in \mathcal{V}$ such that
	\begin{equation}
	  \min\{ a_{li}(t),a_{lj}(t) \} \geq \gamma a_{ij}(t).
	\end{equation}
\label{weakly-aperiodic-def}
\end{definition}

To achieve class-ergodicity under the $\mathcal{P}^*$ class assumption, the number of times in which an agent moves from a jet to another must be finite. Indeed, let
\begin{equation}
	\epsilon \triangleq \frac{1}{3}\min\{ |x^*_s-x^*_k|\, |1 \leq k \neq s \leq c \}.
\end{equation}
Then, there exists $T_{\epsilon}$ such that for every $t \geq T_{\epsilon}$,
\begin{equation}
  |x_i(t) - x^*_k| < \epsilon, \forall i \in J^k(t).
\end{equation}
If agent $i$ moves from a jet, say $J^1$, to another jet, say $J^2$, at time $t$ ($i \in J^1(t) \cap J^2(t+1)$), we must have:
\begin{equation}
  |\sum_{j\not\in J^1(t)} a_{ij}(t)(x_j(t)-x_i(t))| \geq \epsilon.
\label{111}
\end{equation}
On the other hand,
\begin{equation}
  \begin{array}{ll}
	  \hspace{-.5in}|\sum_{j\not\in J^1(t)} a_{ij}(t)(x_j(t)-x_i(t))| & \vspace{.05in} \\ & \hspace{-1.6in} \leq \sum_{j \not\in J^1(t)} a_{ij}(t) |x_j(t)-x_i(t)|\vspace{.05in} \\
		& \hspace{-1.6in}\leq L \sum_{j \not\in J^1(t)} a_{ij}(t),
	\end{array}
\label{112}	
\end{equation}
where $L$ is defined in Eq. (\ref{DS-L}). Eqs. (\ref{111}) and (112) imply:
\begin{equation}
  \sum_{j \not\in J^1(t)} a_{ij}(t) \geq \epsilon / L.
\end{equation}
Thus, there exists $j \not\in J^1(t)$ such that
\begin{equation}
  a_{ij}(t) \geq \frac{\epsilon}{L(N-1)}.
\end{equation}
Now, from the definition of weak aperiodicity, we know that there exists $l \in \mathcal{V}$ such that $\min\{ a_{li}(t),a_{lj}(t) \} \geq \gamma a_{ij}(t) \geq \gamma \epsilon / L (N-1)$. Note that $i$ and $j$ are in different jets at time $t$. Thus, $l$ cannot be in the same jet with both $i$ and $j$ at time $t$. Therefore, at least one of $a_{li}(t),a_{lj}(t)$ indicates an interaction between a jet and its complement. Since both values are bounded below by $\gamma \epsilon / L (N-1)$, the sum of interactions between jets $J^k$'s and their complements is at least $\gamma \epsilon / L (N-1)$ at time $t$. On the other hand, from the D-S Theorem, we now that the total sum of flows between jets and their complements is finite over the infinite time interval. Since $\{ A(t) \}$ is of Class $\mathcal{P}^*$, the total sum of interactions between the jets and their complements must be finite as well. Hence, the number of times that the sum of interactions is at least $\gamma \epsilon / L (N-1)$, must be finite. Therefore, there are finite times in which an agent moves from a jet to another, and the jets become time-invariant after a finite time. It is straightforward to see that the time-invariant jets are connected components of the infinite flow graph.


\subsection{Self-Confident and Cut-Balanced Chains}

\begin{definition}\cite{Bolouki:12b}
  A chain $\{ A(t) \}$ of stochastic matrices is \textit{self-confident} with bound $\delta$ if $a_{ii}(t) \geq \delta$, $\forall i \in \mathcal{V}$, $\forall t \geq 0$.
\end{definition}
\begin{definition}\cite{Hend:11}
  A chain $\{ A(t) \}$ of stochastic matrices is \textit{cut-balanced} with bound $K$ if for every $\mathcal{V}_1 \subseteq \mathcal{V}$ and $t \geq 0$:
	\begin{equation}
	  \sum_{i \not\in \mathcal{V}_1}\sum_{j \in \mathcal{V}_1} a_{ij}(t) \leq K \sum_{i \in \mathcal{V}_1}\sum_{j \not\in \mathcal{V}_1} a_{ij}(t).
	\end{equation}
\end{definition}
\begin{proposition}\cite{Bolouki:12b,Touri:11c}
  If chain $\{ A(t) \}$ is self-confident and cut-balanced, then it is class-ergodic and the islands form the ergodic classes of $\{ A(t) \}$.
\end{proposition}
\begin{proof}
  Assume that $\{ A(t) \}$ has self-confidence and cut-balance properties with bounds $\delta$ and $K$ respectively. The chain being self-confident and cut-balanced, it in Class $\mathcal{P}^*$ (see \cite[Theorem 7]{Touri:11c} where self-confidence is referred to as strong aperiodicity). Thus, from Theorem \ref{IJFP-equivalent}, it is sufficient to show that for an arbitrary island $I$ and an arbitrary proper jet $J$ in $I$, we have $U(J,I \backslash J)=\infty$ (that is the infinite jet flow property holds island-wise). Indeed, if jet-limit $J^*$ exists, unboundedness of $U(J,I \backslash J)$ is immediately implied from unboundedness of $U(J^*,I \backslash J^*)$ in view of the definition of islands. Otherwise, there are infinitely many instants $t$ such that $J(t) \neq J(t+1)$. At every such $t$, there exists $i \in I$ such that $i \in (J(t) \backslash J(t+1)) \cup (J(t+1) \backslash J(t))$. Therefore, recalling (\ref{U_n}), $U_t(J,I\backslash J) \geq a_{ii}(t) \geq \delta$. Since there are infinitely many such times, $U(J,I \backslash J)$ is unbounded.
\end{proof}


\subsection{Balanced Asymmetric Chains}
\label{bolouki}
\begin{definition}\cite{Bolouki:12b}
  A chain $\{ A(t) \}$ of stochastic matrices is said be \textit{balanced asymmetric} with bound $M$, if for every subsets $\mathcal{V}_1,\mathcal{V}_2 \subseteq \mathcal{V}$ of the same cardinality, and for every $t \geq 0$:
	\begin{equation}
	  \sum_{i \not\in \mathcal{V}_1}\sum_{j \in \mathcal{V}_2} a_{ij}(t) \leq M \sum_{i \in \mathcal{V}_1}\sum_{j \not\in \mathcal{V}_2} a_{ij}(t).
	\end{equation}
\end{definition}
\begin{proposition}
Every balanced asymmetric chain is in Class $\mathcal{P}^*$.
\label{balanced-in-P*}
\end{proposition}

To prove Proposition \ref{balanced-in-P*}, we need the following lemma.

\begin{lemma}
Let $A$ be an $(N \times N)$ balanced asymmetric matrix with bound $M$. Then, there exists a permutation matrix $P_{N \times N}$ such that the product $PA$ is self-confident with bound $\delta = 4/(M N^2+4N-4)$.
\label{lem1}
\end{lemma}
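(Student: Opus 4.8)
The plan is to recast the statement as a bipartite matching problem and then to obstruct the failure of Hall's condition using the balanced asymmetric inequality. Writing the sought permutation matrix as $P_{i,\sigma(i)}=1$ for a permutation $\sigma$, one has $(PA)_{ii}=\sum_k P_{ik}A_{ki}=A_{\sigma(i),i}$, so $PA$ being self-confident with bound $\delta$ is exactly the requirement that $A_{\sigma(i),i}\ge\delta$ for every $i$. Thus I must produce a permutation $\sigma$ assigning to each column $i$ a distinct row $\sigma(i)$ with $A_{\sigma(i),i}\ge\delta$; equivalently, the bipartite threshold graph $G_\delta$ (an edge between column $i$ and row $k$ whenever $A_{ki}\ge\delta$) must possess a perfect matching. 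As a first step I would record two easy consequences of balanced asymmetry obtained by feeding singletons and their complements into the defining inequality (and assuming, as one may, $\Psi\ge1$): every column sum lies in $[1/\Psi,\Psi]$, so in particular each column carries a maximal entry of size at least $1/(\Psi N)>\delta$ and hence is incident to at least one edge of $G_\delta$.

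Next I would assume, for contradiction, that $G_\delta$ has no perfect matching. By Hall's theorem (equivalently the Frobenius--K\"onig theorem) there is then a set $S$ of columns whose neighbourhood $T=N_\delta(S)$ satisfies $|T|<|S|$; writing $s=|S|$, $t=|T|$, this means $A_{ij}<\delta$ for every $i\notin T$ and $j\in S$, i.e.\ an all-small rectangular block sitting on rows $\overline{T}$ and columns $S$. I would then extract a contradiction by applying the balanced asymmetric inequality to equal-cardinality sets $M_1,M_2$ built from $T$, $S$ and their complements, padded by the single extra index that the Frobenius--K\"onig form permits. The idea is to play the mass that row-stochasticity forces \emph{out of} the block (each of the $N-t$ rows of $\overline{T}$ spends all but less than $\delta s$ of its unit of mass outside $S$) against the reverse cross-flow, which is confined to the all-small block and is therefore bounded by $\delta$ times the block's area. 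Since the block area $|\overline{T}|\cdot|S|$, after trimming to the extremal Frobenius configuration $|R'|+|C'|=N+1$, is maximized near $|R'|\approx|C'|\approx N/2$ at roughly $N^2/4$, and since the inequality contributes one factor of $\Psi$, the accounting should close to an inequality of the shape $1\le\delta\bigl(\Psi N^2/4+(N-1)\bigr)$, i.e.\ $\delta\ge 4/(\Psi N^2+4N-4)$, contradicting the strict smallness of the block entries for exactly the stated value of $\delta$; the $\Psi N^2$ term comes from the extremal block dimensions and the $4N-4=4(N-1)$ term from the off-by-one padding.

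The main obstacle is precisely this quantitative step. A single, naive flow inequality does not suffice: if one bounds the large forced outflow against a reverse flow estimated merely by a row's total mass, one loses a spurious factor of $\Psi$ (producing a useless $\Psi^2$ in the denominator), and indeed the mass that the neighbourhood $T$ may pour into $S$ is the quantity one must control. The real work is therefore to use the \emph{full} strength of balanced asymmetry---its comparison of \emph{distinct} equal-cardinality sets---rather than its cut-balanced specialization, choosing $M_1,M_2$ so that the forced outflow appears with coefficient exactly $1$ (read off from an exact row sum, with no $\Psi$ attached on the large side) while every surviving term on the other side lies inside the all-small block and is thus $O(\delta)$. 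Getting the dependence on $\Psi$ to be linear, and pinning the extremal constants $N^2/4$ and $N-1$ so that the threshold lands exactly at $4/(\Psi N^2+4N-4)$, is the delicate crux; one should also treat the two regimes $|R'|\gtrless|C'|$ symmetrically and verify that the admissible entries which the inequality forces above the small scale indeed clear the threshold.
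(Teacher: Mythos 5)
Your proposal is correct and takes essentially the same route as the paper: the paper builds the identical threshold bipartite graph (edge when $A_{ij}\ge\delta$ with $\delta=4/(\Psi N^2+4N-4)$), assumes a Hall violator, and closes with the very inequality you predict, $1-(N-1)\delta<\delta\Psi N^2/4$, your only immaterial deviation being that your violator sits on the column side rather than the row side. The step you flag as the delicate crux is handled exactly as you anticipate: the paper trims the large side $\mathcal{K}$ of the violator to $\mathcal{K}'\subsetneq\mathcal{K}$ with $|\mathcal{K}'|=|D(\mathcal{K})|$ and applies balanced asymmetry to $(M_1,M_2)=(\mathcal{K}',D(\mathcal{K}))$, so that a leftover row in $\mathcal{K}\setminus\mathcal{K}'$ forces mass at least $1-(N-1)\delta$ into $D(\mathcal{K})$ with coefficient one, while the comparison side $\sum_{i\in\mathcal{K}'}\sum_{j\notin D(\mathcal{K})}A_{ij}$ lies wholly inside the all-small block and is bounded by $\delta N^2/4$.
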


\begin{proof}
Form a bipartite-graph $\mathcal{H}(\mathcal{V},\mathcal{E})$ from $A$ with $N$ nodes in each part. Let $\mathcal{V}_1$ and $\mathcal{V}_2$, each a copy of $\mathcal{V}$, be sets of nodes of the two parts of $\mathcal{H}$. For every $i \in \mathcal{V}_1$ and $j \in \mathcal{V}_2$, connect $i$ to $j$ if $a_{ij} \geq \delta = 4/(M N^2+4N-4)$. We wish to show that $\mathcal{H}$ has a perfect matching. By Hall's Marriage Theorem \cite[Theorem 5.2]{Bondy:76}, it suffices to show that for every subset $\mathcal{K} \subseteq \mathcal{V}_1$, we have $|D(\mathcal{K})| \geq |\mathcal{K}|$ where
\begin{equation}
  D(\mathcal{K}) = \{ j \in \mathcal{V}_2 | \exists i \in \mathcal{K} \text{ s.t. } (i,j) \in \mathcal{E} \}.
\end{equation}
Indeed, assume that on the contrary, there exists $\mathcal{K} \subseteq \mathcal{V}_1$ such that $k'=|D(\mathcal{K})| < |\mathcal{K}|=k$. Let $\mathcal{K} = \{ c_1,\ldots,c_k \}$ and $D(\mathcal{K}) = \{ d_1,\ldots,d_{k'} \}$. Define $\mathcal{K}' \subsetneq \mathcal{K}$ by $\mathcal{K}' = \{ c_1,\ldots,c_{k'} \}$. We now have:
\begin{equation}
\sum_{i \in \mathcal{K}'}\sum_{j \not\in D(\mathcal{K})} a_{ij} < k' (N-k') \delta \leq \delta N^2/4.
\label{lem1-1}
\end{equation}
On the other hand,
\begin{equation}
  \begin{array}{ll}
    \sum_{i \not\in \mathcal{K}'}\sum_{j \in D(\mathcal{K})} a_{ij} & \hspace{-.1in} \geq \sum_{i \in \mathcal{K} \backslash \mathcal{K}'}\sum_{j \in D(\mathcal{K})} a_{ij} \vspace{.05in} \\ & \hspace{-.1in} = (k-k')- \sum_{i \in \mathcal{K} \backslash \mathcal{K}'}\sum_{j \not\in D(\mathcal{K})} a_{ij}\vspace{.05in} \\ & \hspace{-.1in} \geq (k-k')-(k-k')(N-k')\delta \vspace{.05in} \\ & \hspace{-.1in} \geq 1 - (N-1)\delta.
  \end{array}
\label{lem1-2}
\end{equation}
Since $\mathcal{K}',D(\mathcal{K}) \subsetneq \mathcal{V}$ are of identical cardinalities, the balanced asymmetry property of $A$ together with (\ref{lem1-1}) and (\ref{lem1-2}) imply that
\begin{equation}
  1 - (N-1) \delta < \delta M N^2/4.
\end{equation}
Thus, $\delta > 4/(M N^2+4N-4)$, which is a contradiction. Therefore, $\mathcal{H}$ has a perfect matching and consequently, there exists a permutation $\tau$ such that $a_{\tau(i),i} \geq \delta$, $\forall i$. Thus, the permutation matrix $P$ with $e_{\tau(i)}$ as its $i$th row, where $e_j$ denotes a row vector of length $N$ with 1 in the $j$th position and 0 in every other position, is such that the product $PA$ is self-confident with $\delta$.
\end{proof}

\hspace{.1in} \textit{Proof of Proposition \ref{balanced-in-P*}:} Let $\{A(t)\}$ be a balanced asymmetric chain with bound $M$. Set: $\delta = 4/(M N^2+4N-4)$. We recursively define sequence $\{P(t)\}$ of permutation matrices as follows: From Lemma \ref{lem1}, we know that there exists a permutation matrix $P(0)$ such that the product $P(0)A(0)$ is self-confident with $\delta$. Find permutation matrix $P(t)$, $t \geq 1$, such that the product $P(t)A(t)P'(t-1)$ is self-confident with $\delta$. Note that the existence of $P(t)$ is implied by Lemma \ref{lem1}, taking into account the fact that the product $A(t)P'(t-1)$ is balanced asymmetric with bound $M$, since the columns of the product are a permutation of the columns of $A(t)$, itself a balanced asymmetric matrix with bound $M$. Hence, if we define chain $\{B(t)\}$ by:
\begin{equation}
  B(t) = P(t)A(t)P'(t-1),
\end{equation}
then, $\{B(t)\}$ has both the self-confidence and balanced asymmetry properties. Since balanced asymmetry is stronger than cut-balance, chain $\{B(t)\}$ is both self-confidence and cut-balanced. Thus, from \cite{Touri:11c}, we conclude that chain $\{B(t)\}$ belongs to the set $\mathcal{P}^*$. Furthermore, it is straightforward to show that if $\{ \pi(t) \}$ is an absolute probability sequence adapted to chain $\{B(t)\}$, then $\{ \pi(t) P(t-1) \}$, where $P(-1) = I_{N \times N}$, is an absolute probability sequence adapted to chain $\{A(t)\}$. This immediately implies that $\{A(t)\} \in \mathcal{P}^*$.
\qed

The class property $\mathcal{P}^*$ implies that absolute probabilities are uniformly bounded away from zero, and as a result, that there is no $J^0$ in the jet decomposition of the D-S Theorem. Therefore, we again consider only $J^1,\ldots,J^c$ as the jet decomposition.
\begin{proposition}
  If $\{ A(t) \}$ is balanced asymmetric, then the cardinality of each jet in the jet decomposition of the D-S Theorem, becomes time-invariant after a finite time.
\label{time-invariant-jet-cardinality}
\end{proposition}
\begin{proof}
  Let $\{ A(t) \}$ be balanced asymmetric with bound $M$. It suffices to show that there are finite times in which cardinality of a jet, in the jet decomposition of the D-S Theorem, increases by at least 1. In the following, we see what happens when the cardinality of a jet, say $J^k$, increases. Assume that for a fixed $t \geq 0$, we have $|J^k(t+1)| > |J^k(t)|$. For an arbitrary $i \in J^k(t+1)$, let $\mathcal{T} \subsetneq J^k(t+1)$ be such that $i \not\in \mathcal{T}$ and $|\mathcal{T}| = |J^k(t)|$. Thus by the balanced asymmetry property,
	\begin{equation}
	  \begin{array}{ll}
	  \sum_{j \in J^k(t)} a_{ij}(t) & \hspace{-.1in} \leq \sum_{l\not\in \mathcal{T}}\sum_{j \in J^k(t)} a_{lj}(t) \vspace{.05in} \\ & \hspace{-.1in} \leq M \sum_{l\in \mathcal{T}}\sum_{j \not\in J^k(t)} a_{lj}(t) \vspace{.05in} \\ & \hspace{-.1in} \leq  M \sum_{l \in J^k(t+1)}\sum_{j \not\in J^k(t)} a_{lj}(t).
		\end{array}
	\end{equation}
	Therefore,
	\begin{equation}
	  \begin{array}{l}
	    \hspace{-.2in}\sum_{i\in J^k(t+1)}\sum_{j \in J^k(t)}a_{ij}(t)\vspace{.05in} \\ \hspace{.1in} \leq |J^k(t+1)|.M \sum_{i \in J^k(t+1)}\sum_{j \not\in J^k(t)} a_{ij}(t).
		\end{array}
	\label{1001}
	\end{equation}
	On the other hand,
	\begin{equation}
	  \begin{array}{l}
	    \hspace{-.2in}\sum_{i\in J^k(t+1)}\sum_{j \in J^k(t)}a_{ij}(t)\vspace{.05in} \\ \hspace{.1in}= |J^k(t+1)| - \sum_{i \in J^k(t+1)}\sum_{j \not\in J^k(t)} a_{ij}(t).
		\end{array}
	\label{1002}
	\end{equation}
	Eqs. (\ref{1001}) and (\ref{1002}) together imply:
	\begin{equation}
	  \hspace{0in}\sum_{i \in J^k(t+1)}\sum_{j \not\in J^k(t)} a_{ij}(t) \geq \frac{|J^k(t+1)|}{1+M |J^k(t+1)|} \geq \frac{1}{1+M}.
	\label{1003}
	\end{equation}
	Once again, since the cumulative interaction between $J^k$ and $\bar{J}^k$ must be finite over the infinite time interval because of the D-S Theorem and in view of the class property $\mathcal{P}^*$, (\ref{1003}) can occur only for finitely many times $t$, and this completes the proof.
\end{proof}
An immediate corollary of Proposition \ref{time-invariant-jet-cardinality} is as follows.
\begin{corollary}\cite{Bolouki:12b}
  Consider a multi-agent system with dynamics (\ref{DS-modelss}), where $\{ A(t) \}$ is balanced asymmetric. Then, $z_i(t)$ converges for every $i \in \mathcal{V}$, as $t$ goes to infinity, where $z_i(t)$ is the $i$th least value among $x_1(t),\ldots,x_N(t)$.
\end{corollary}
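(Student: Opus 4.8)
The plan is to read the convergence of the order statistics directly off the D-S decomposition, using Theorem \ref{time-invariant-jet-cardinality} to freeze the sizes of the clusters. Since $\{A(n)\}$ is balanced asymmetric, Theorem \ref{balanced-in-P*} places it in class $\mathcal{P}^*$, so the D-S decomposition carries no $J^0$ and consists of jets $J^1,\ldots,J^c$ with associated limit values $X^*_1,\ldots,X^*_c$; by D-S property (i), every sequence $\{i_n\}$ with $i_n \in J^k(n)$ satisfies $X_{i_n}(n)\to X^*_k$. Grouping any jets that share a common limit value (which only merges blocks and keeps their sizes eventually constant), I may assume without loss of generality that $X^*_1 < \cdots < X^*_c$.

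First I would freeze the cluster sizes: by Theorem \ref{time-invariant-jet-cardinality} there is a finite time $T_0$ after which each $|J^k(n)|$ equals a constant $\nu_k$, with $\nu_1+\cdots+\nu_c = N$. Next, taking $\epsilon$ as in (\ref{epsilon}) and $T_\epsilon$ as in (\ref{T_eps}), for every $n \geq \max\{T_0,T_\epsilon\}$ the states $X_1(n),\ldots,X_N(n)$ split into $c$ blocks: block $l$ comprises the $\nu_l$ states of the agents currently in $J^l(n)$, all lying in the interval $(X^*_l-\epsilon,\,X^*_l+\epsilon)$, and by the choice of $\epsilon$ these intervals are pairwise disjoint and ordered increasingly in $l$.

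The key step is to align the order statistics with these blocks. Because the intervals are separated and ordered and each contains exactly $\nu_l$ of the states, sorting places the smallest $\nu_1$ states in block $1$, the next $\nu_2$ in block $2$, and so on. Hence, for every index $i$ with $\nu_1+\cdots+\nu_{l-1} < i \leq \nu_1+\cdots+\nu_l$ and every $n \geq \max\{T_0,T_\epsilon\}$, the order statistic $z_i(n)$ is the state of some agent residing in $J^l(n)$, so $z_i(n)\in(X^*_l-\epsilon,\,X^*_l+\epsilon)$. Finally I would upgrade this localization to genuine convergence: holding the now-fixed block structure in place, the convergence $X_{i_n}(n)\to X^*_l$ for agents of block $l$ gives, for any $\epsilon' > 0$, a time beyond which every block-$l$ state lies within $\epsilon'$ of $X^*_l$, whence $|z_i(n)-X^*_l| < \epsilon'$ for all large $n$. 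Therefore $z_i(n)\to X^*_{l(i)}$, where $l(i)$ is the block index determined by $i$ and the cumulative cardinalities.

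I expect the alignment step to be the only real obstacle, and it is precisely where Theorem \ref{time-invariant-jet-cardinality} is indispensable. If the jet cardinalities were allowed to keep fluctuating, a fixed rank $i$ could land on the boundary between two blocks infinitely often, so that $z_i(n)$ would oscillate between two distinct limits $X^*_l$ and $X^*_{l+1}$ and fail to converge. Once the $\nu_k$ are frozen and the clusters are separated, the remainder reduces to the elementary fact that a fixed-rank order statistic drawn from a collection of values all converging to a common limit itself converges to that limit.
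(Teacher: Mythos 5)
Your proof is correct and matches the paper's intended argument: the paper offers no written proof, declaring the result ``an immediate corollary'' of Theorem \ref{time-invariant-jet-cardinality}, and your fleshed-out chain (Theorem \ref{balanced-in-P*} to get $\mathcal{P}^*$ and a $J^0$-free D-S decomposition, frozen jet cardinalities, $\epsilon$-separation of the limit values, and alignment of order statistics with the blocks) is exactly the reasoning being invoked. You also correctly identify that only the cardinalities, not the jets themselves, need to stabilize for the rank-to-block assignment to work, which is precisely why Theorem \ref{time-invariant-jet-cardinality} suffices.
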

\begin{definition}\cite{Touri:3}
  A chain $\{ A(t) \}$ of stochastic matrices is said to have the \textit{absolute infinite flow property}, if for every jet $J$ in $\mathcal{V}$ with a time-invariant size, $U(J,\mathcal{V} \backslash J)$ is unbounded.
\end{definition}
\begin{proposition}\cite{Bolouki:12b}
  If $\{A(t)\}$ is balanced asymmetric, then, $\{A(t)\}$ is class-ergodic if and only if the absolute infinity property holds over each island of $\{A(t)\}$. Furthermore, in case of class-ergodicity, islands are the ergodic classes of $\{A(t)\}$.
\end{proposition}
\begin{proof}
  From Proposition \ref{balanced-in-P*}, we know that $\{A(t)\} \in \mathcal{P}^*$. Therefore, taking advantage of Theorem \ref{IJFP-equivalent}, it suffices to show that absolute infinite flow and infinite jet-flow properties are equivalent on each island. Obviously, the former is implied by the latter. We prove the converse as follows: Let the absolute infinite flow property hold over each island. Assume that $I$ is an arbitrary island of $\{A(t)\}$ and $J$ is an arbitrary jet in $I$. If the cardinality of jet $J$ becomes time-invariant after a finite time, unboundedness of $U(J,I \backslash J)$ is immediately implied from the absolute infinite flow property over $I$. Otherwise, the cardinality of $J$ increases infinitely many times by at least 1. In this case, from the proof of Proposition \ref{time-invariant-jet-cardinality} (see (\ref{1003})), we know that $V(J,\mathcal{V} \backslash J)$ is unbounded, and consequently $U(J,\mathcal{V} \backslash J)$ is unbounded following Lemma \ref{infinite V-U}. Moreover,
	\begin{equation}
	  U(J,\mathcal{V} \backslash J) + U(I \backslash J, \mathcal{V} \backslash I) = U(J, I \backslash J) + U(I, \mathcal{V}\backslash I),
	\end{equation}
	and since $U(I, \mathcal{V}\backslash I)$ is bounded because $I$ is an island, unboundedness of $U(J,\mathcal{V} \backslash J)$ implies that $U(J, I \backslash J)=\infty$. This completes the proof.
\end{proof}
\begin{corollary}\cite{Bolouki:12b}
  If chain $\{ A(t) \}$ is balanced asymmetric, then it is ergodic if and only if it has the absolute infinite flow property.
\end{corollary}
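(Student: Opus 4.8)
The plan is to obtain this corollary as the single-island specialization of the immediately preceding theorem. The key observation is that ergodicity is precisely class-ergodicity in which there is a single ergodic class: the backward products converge to a matrix with identical rows exactly when the (unique) consensus cluster is all of $M$. Since the preceding theorem guarantees that, for a balanced asymmetric chain, class-ergodicity holds if and only if the absolute infinite flow property holds over each island, and moreover that in the class-ergodic case the islands coincide with the ergodic classes, I would reduce the statement to showing that the absolute infinite flow property over $M$ is equivalent to the conjunction of (a) $\{A(n)\}$ possessing a single island and (b) the absolute infinite flow property holding over that island. Throughout, Theorem \ref{balanced-in-P*} guarantees $\{A(n)\}\in\mathcal{P}^*$, so the preceding theorem applies.

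For the ``only if'' direction, I would argue as follows. Suppose $\{A(n)\}$ is ergodic, hence class-ergodic with a single ergodic class. By the preceding theorem the absolute infinite flow property holds over each island and the islands are exactly the ergodic classes; a single ergodic class therefore forces a single island $I=M$. The absolute infinite flow property over the island $I=M$ is literally the absolute infinite flow property of $\{A(n)\}$, which gives the conclusion.

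For the ``if'' direction, assume $\{A(n)\}$ has the absolute infinite flow property. First I would show this forces a single island. If there were two or more islands, pick an island $I_0\subsetneq M$ and consider the constant jet $J$ with $J(n)=I_0$ for all $n$; this jet has time-invariant size $|I_0|$, so the absolute infinite flow property would require $U(J,M\setminus J)=\infty$. But by the definition of an island (Definition \ref{island-def}), no edge of the infinite flow graph joins $I_0$ to its complement, so $\sum_n(A_{ij}(n)+A_{ji}(n))<\infty$ for each $i\in I_0$, $j\notin I_0$; summing over the finitely many such pairs in Eq. (\ref{U}) yields $U(I_0,M\setminus I_0)<\infty$, a contradiction. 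Hence $I=M$ is the unique island, and the absolute infinite flow property over $M$ is exactly the property over that island. The preceding theorem then yields class-ergodicity with the single island as the unique ergodic class, i.e. ergodicity.

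The only step that is not essentially bookkeeping is the finite-flow-across-the-boundary estimate used in the ``if'' direction; the main obstacle is getting the indexing of $U$ in Eq. (\ref{U}) right for a constant jet so that $U(I_0,M\setminus I_0)<\infty$ follows cleanly from the absence of infinite-flow edges across the island boundary. Everything else is a direct appeal to the preceding theorem and to Theorem \ref{balanced-in-P*}.
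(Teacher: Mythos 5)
Your proposal is correct and follows exactly the route the paper intends: the paper states this corollary without proof as the immediate single-island specialization of the preceding theorem, and your reduction---ergodicity equals class-ergodicity with one ergodic class, and the absolute infinite flow property of the chain forces (via constant proper jets and the finiteness of $\sum_n (A_{ij}(n)+A_{ji}(n))$ across any island boundary) a single island $I=M$---is precisely the missing bookkeeping, analogous to how Corollary \ref{IJFP-ergodicity} follows from Theorem \ref{IJFP-equivalent}. The only cosmetic caveat is that the cited definition of the absolute infinite flow property should be read as quantifying over \emph{proper} jets of time-invariant size, which your argument implicitly and correctly assumes.
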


\section{A Geometric Approach towards Consensus Algorithms}
\label{geo-approach-discrete}
In this section, we introduce a geometric framework for a general linear consensus algorithm, that not only interprets the notions of jets and the ocean as explained in the previous sections, but serves an alternative proof of our results stated in the previous sections, and furthermore, as will be shown in the next section, extends them naturally to the continuous time case.

Let $\Phi(t,\tau)$, $t,\tau \geq 0$, be the state transition matrix of discrete time model (\ref{DS-modelss}), i.e.,
\begin{equation}
  \Phi(t,\tau) = A(t-1)A(t-2) \cdots  A(\tau).
  \label{phiii}
\end{equation}
Therefore,
\begin{equation}
  x(t) = \Phi(t,\tau) x(\tau), \, \forall t,\tau \geq 0.
\end{equation}
For every $t \geq \tau \geq 0$, assume that $C_{t,\tau}$ is the convex hull of the columns of $\Phi'(t,\tau)$. Note that each column of $\Phi'(t,\tau)$ is a stochastic vector representing a point in $\mathbb{R}^N$, and $C_{t,\tau}$ is a polytope in $\mathbb{R}^N$ if we consider points and segments in $\mathbb{R}^N$ as polytopes with one and two vertices respectively.
\begin{lemma}
  For every $t_2 \geq t_1 \geq \tau$, we have: $C_{t_2,\tau} \subset C_{t_1,\tau}$, i.e., polytope $C_{t_1,\tau}$ contains polytope $C_{t_2,\tau}$.
	\label{DS-convex-hull}
\end{lemma}
\begin{proof}
  From (\ref{phiii}), we have:
	\begin{equation}
	  \Phi'(t_2,\tau) = \Phi'(t_1,\tau)\Phi'(t_2,t_1).
	\end{equation}
	Since $\Phi(t_2,t_1)$ is a stochastic matrix, each column of $\Phi'(t_2,\tau)$ is a convex combination of columns of $\Phi'(t_1,\tau)$. Therefore, every column of $\Phi'(t_2,\tau)$ lies in $C_{t_1,\tau}$, and the lemma is proved.
\end{proof}

Lemma \ref{DS-convex-hull} shows that for a fixed $\tau \geq 0$, $C_{t,\tau}$ shrinks as $t$ grows. A projection of nested polytopes $C_{t,\tau}$'s on a two-dimensional space is shown in Fig \ref{Fig2-1}.

\begin{figure}[h]
  \begin{center}
  \captionsetup{justification=centering}
    \includegraphics[scale=1]{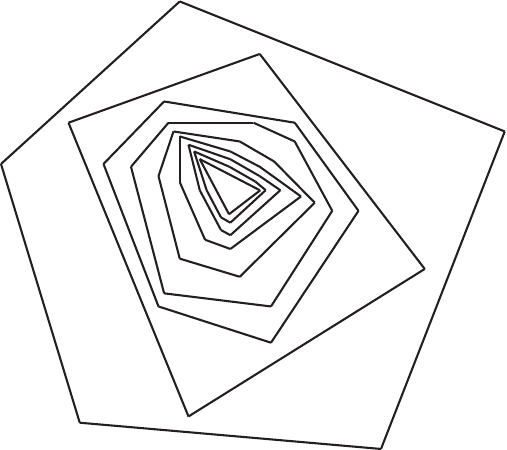}
    \caption{An example of nested polygons converging to a triangle.}
    \label{Fig2-1}
  \end{center}
\end{figure}

It is to be noted that when underlying chain $\{A(t)\}$ of dynamics (\ref{DS-modelss}) is ergodic, the nested polygons will converge to a point in $\mathbb{R}^N$. In general, one concludes that for every $\tau \geq 0$, $\lim_{t \rightarrow \infty} C_{t,\tau}$ exists and is also a polytope in $\mathbb{R}^N$. Let $C_{\tau}$ be the limiting polytope with $c_{\tau}$ vertices. It is clear that $1 \leq c_{\tau} \leq N$. One can show that $c_{\tau}$ is a non-decreasing function of $\tau$ (see \cite[Part VIII-B]{Bolouki:14b}) and becomes constant after some finite time. We assume, without loss of generality, that $c_{\tau}$ is equal to constant $c$, $\forall \tau \geq 0$. It is worth mentioning that the choice of letter $c$ here, that also represents the number of jets in the jet decomposition of the Sonin's D-S Theorem in this paper, for the number of vertices of limiting polytope $C_0$, is not accidental, as it will be shown, in the current section, that the two numbers are equal.

Let $v_1,\ldots,v_c$ be the $c$ vertices of $C_0$. Assume that $\{0_t\}$ is a sequence of agents, i.e., $0_t \in \mathcal{V}$ for every $t \geq t_0$.
\begin{theorem}
  If sequence $\{0_t\}_{t \geq 0}$, $0_t \in \mathcal{V}$, $\forall t \geq 0$, is such that the distance between $\Phi'_{0_t}(t,0)$ and set $\{v_1,\ldots,v_c\}$ does not converge to zero as $t$ grows large, then:
	\begin{equation}
	  \inf \{ \pi_{0_t}(t) \, | \, t \geq 0 \} = 0.
	\end{equation}
\label{ocean detected}
\end{theorem}
\begin{proof}
	We know that vector $v_i$, $1 \leq i \leq c$, lies outside of the convex hull of vectors $v_j$'s, $j \neq i$. Let $w_i$ be the nearest point to $v_i$, on the convex hull of $v_j$'s, $j \neq i$. For a small $\epsilon' > 0$, draw a hyperplane, distant $\epsilon'$ from $v_i$, crossing segment $v_iw_i$ and orthogonal to it. Let $u(t) \triangleq  \Phi'_{0_t}(t,0)$. For a sufficiently small $\epsilon'$, there exists a subsequence of $\{u(t)\}$ such that $v_i$ and the elements of the subsequence lie on opposite sides of the hyperplane for every $i$, $1 \leq i \leq c$. Otherwise, the distance between $\{u(t)\}$ and set $\{v_1,\ldots,v_c\}$ would converge to zero.
	Define:
	\begin{equation}
	  \epsilon'_1 \triangleq \min \{ |v_i-w_i| \, | \, 1 \leq i \leq c \},
	\end{equation}
	and:
	\begin{equation}
	  \epsilon \triangleq \min\{ \epsilon' , \epsilon'_1/4 \},
	\end{equation}
	and for an arbitrary constant $\delta$, $0 < \delta < 1$, let:
	\begin{equation}
	  \epsilon_1 \triangleq \delta\epsilon /(2N)
	\end{equation}
	We summarize the rest of the proof, since it is very similar to the proof of \cite[Lemma 7]{Bolouki:14b}, from (25) to (35). We know that for a sufficiently large time $T \geq 0$, if $t \geq T$, every vector in $C_{t,0}$ lies within an $\epsilon_1$-distance of $C_{0}$. For every $i$, $1 \leq i \leq c$, draw a hyperplane $l_i$, parallel to the hyperplane drawn previously, distant $\epsilon$ from $v_i$, crossing segment $v_iw_i$. Draw also a hyperplane $m_i$, parallel to $l_i$, on the other side of $v_i$, distant $\epsilon_1$ from $v_i$ (see Fig. \ref{Figocean}).
	
		\begin{figure}[h]
  		\begin{center}
  				\captionsetup{justification=centering}
    				\vspace{-.7in}
    				\includegraphics[scale=.4]{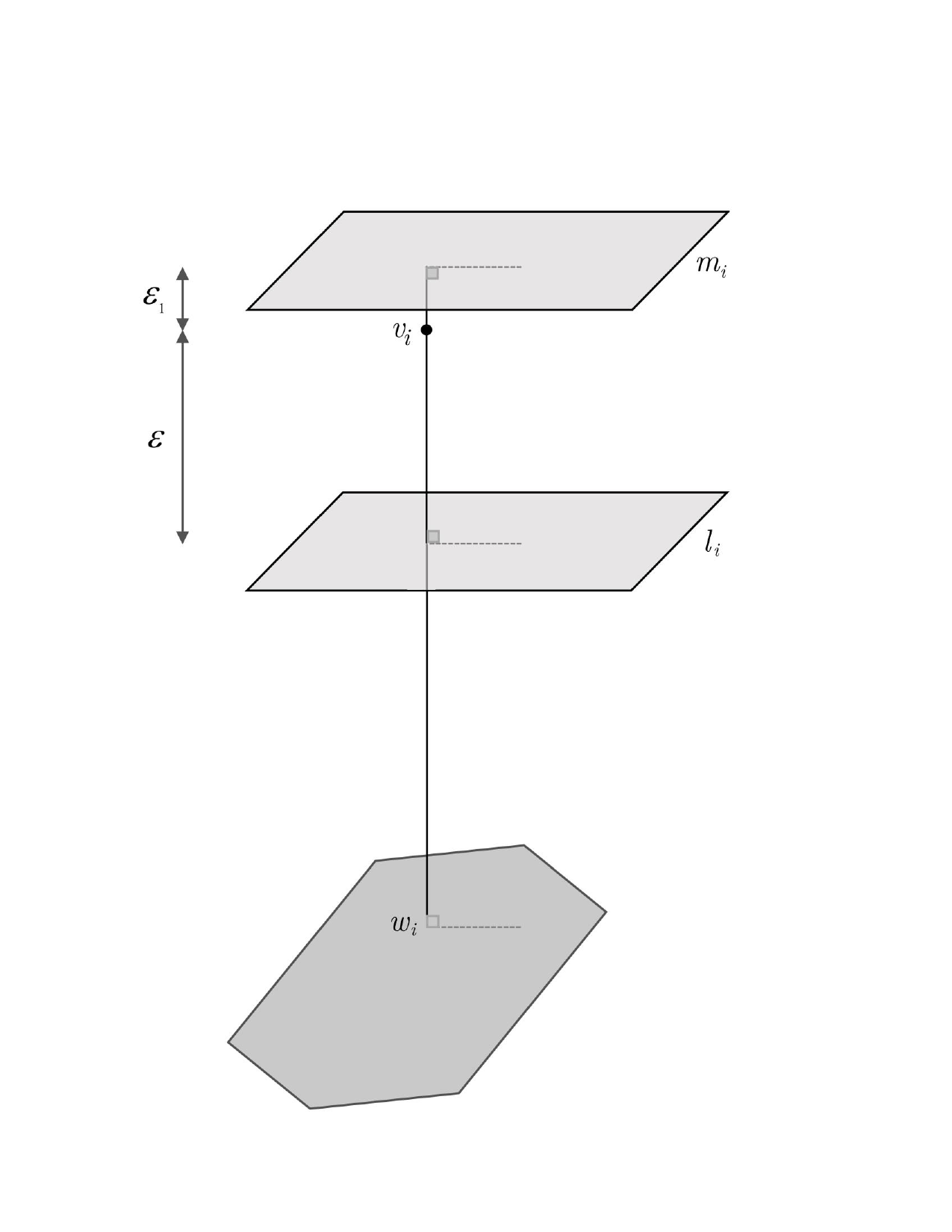}
    				\vspace{-.4in}
    				\caption{Planes $l_i$ and $m_i$ are orthogonal to segment $v_iw_i$.}
		\label{Figocean}
			\end{center}
		\end{figure}
	
	Define for every $i$, $1 \leq i \leq c$:
	\begin{equation}
	  S^i = \{ j \in \mathcal{V} \, | \, \Phi'_j(T,0) \text{ lies in strip margined by } l_i,m_i \}.
	\end{equation}
	One can show that, $S^i$'s, $1 \leq i \leq c$, are disjoint non-empty sets. Define also:
	\begin{equation}
	  S^0 = \mathcal{V}\, \backslash \bigcup_{j=1}^c S^j.
	\end{equation}
	As mentioned above, there exists a subsequence of $\{u(t)\}$ such that $v_i$ and the elements of the subsequence lie on the opposite sides of $l_i$ (note that $\epsilon \leq \epsilon'$) for every $i = 1,\ldots,c$. Without loss of generality, assume that $\{u(T)\}$ belongs to that subsequence (otherwise, choose $T_1>T$ such that $u(T_1)$ belongs to that subsequence, and replace $T$ by $T_1$ in the argument). Hence, $S^0 \neq \emptyset$, and $S^i$ 's partition agent set $\mathcal{V}$. Similar to the proof of \cite[Lemma 7]{Bolouki:14b}, we have the following inequality (equivalence of inequality (35) in \cite{Bolouki:14b}):
	\begin{equation}
	  \sum_{j \not\in S^i} (u_i)_j \leq 2\delta/(2N+1)< \delta/N.
	\end{equation}
	Consequently,
	\begin{equation}
	  \sum_{j \in S^0} (u_i)_j \leq \sum_{j \not\in S^i} (u_i)_j < \delta/N.
	\end{equation}
	Thus, for every $i \in \mathcal{V}$ and $j \in S^0$:
	\begin{equation}
	  \inf \{\Phi_{i,j}(t,T) \, | \, t \geq T\} < \delta / N.
	\end{equation}
		Consequently,
	\begin{equation}
	  \inf \{\sum_{i \in \mathcal{V},j\in S^0}\Phi_{i,j}(t,T) \, | \, t \geq T\} < N\delta/N = \delta.
	\label{143}
	\end{equation}
       Since we have:	
	\begin{equation}
	  \pi_j(T) = \pi(t) \Phi^j(t,T) = \sum_{i \in \mathcal{V}} \pi_i(t) \Phi_{i,j}(t,T) \leq \sum_{i \in \mathcal{V}}\Phi_{i,j}(t,T),
	\end{equation}
       from (\ref{143}) we conclude that:
	\begin{equation}
	  \pi_j(T) < \delta, \, \forall j \in S^0.
	\end{equation}
	We recall that $S^0$ includes one of the elements of sequence $\{0_t\}$, i.e., $0_T$. Hence, $\pi_{0_T}(T) < \delta$. Recall also that $\delta$ was chosen arbitrarily. By letting $\delta$ go to zero, we conclude that $\inf \{ \pi_{0_t}(t) \, | \, t \geq 0 \} = 0$, and the theorem is proved.
\end{proof}

\begin{remark}
\label{remm}
  We explain, in the following, that there is a one-to-one correspondence between the vertices of limiting polytope $C_0$ and jets $J^1,\ldots,J^c$ of the Sonin's jet decomposition.
  
  Recall, from Section \ref{D-S Theorem}, that how we employed the absolute probability sequence of chain $\{A(t)\}$ to construct a forward propagating Markov chain from the given backward one. Now, let $J^k$ be an arbitrary jet among $J^1,\ldots,J^c$. Let, also, $\{k_t\}$ be a sequence inside jet $J^k$, i.e., $k_t \in J^k(t)$, $\forall t \geq 0$. Since, due to the D-S Theorem, $\lim_{t \rightarrow \infty} x_{k_t}(t)$ exists irrespective of what $x(0)$ is, $\lim_{t \rightarrow \infty} \Phi'_{k_t}(t,0)$ exists as well, and is irrespective of how the sequence is chosen. We aim to show that $\lim_{t \rightarrow \infty} \Phi'_{k_t}(t,0)$ is one of $v_1,\ldots,v_c$. Since the volume of $J^k(t)$ converges to a non-zero constant, as $t \rightarrow \infty$, one can form a sequence $\{k_t\}$ inside jet $J^k$, i.e., $k_t \in J^k(t)$, $\forall t \geq 0$, such that:
  \begin{equation}
    \liminf \{ \pi_{k_t}(t) \, | \, t \geq 0 \} > 0.
    \label{djk}
  \end{equation}
  One way to form such a sequence is to pick, at each time instant, the cup in $J^k$ that has the maximum volume. From Theorem \ref{ocean detected} and inequality (\ref{djk}), we conclude that the distance between $\Phi'_{k_t}(t,0)$ and set $\{v_1,\ldots,v_c\}$, the vertices of limiting polytope $C_0$, must vanish as $t$ grows large. Thus, $\lim_{t \rightarrow \infty} \Phi'_{k_t}(t,0)$ is belongs to set $\{v_1,\ldots,v_c\}$.
  
  It is also clear that if sequences $\{s_t\}$ and $\{k_t\}$ are in two disjoint jets $J^s$ and $J^k$ respectively, $\lim_{t \rightarrow \infty} \Phi'_{s_t}(t,0)$ and $\lim_{t \rightarrow \infty} \Phi'_{k_t}(t,0)$ cannot converge to the same vertex of $C_0$, since otherwise, merging the two jets would violate the uniqueness of the Sonin's jet decomposition.
\end{remark}


\section{Consensus in the Continuous Time Case}
\label{cons-cont}

One may define a general linear consensus algorithm in continuous time as follows:
\begin{equation}
  \dot{x} = A(t)x(t), \, t \geq 0,
  \label{DS-systemss}
\end{equation}
where $x(t)$ is the vector of opinions at each time instant $t \geq 0$ and $\{A(t)\}$ is the underlying chain of the system. It is assumed that each matrix of underlying chain $A(t)$ has zero row sum and non-negative off-diagonal elements and each element $a_{ij}(t)$ of $A(t)$ is a measurable function. These constraints on the underlying chain suggest a view of A(t) as the evolution of the intensity matrix of a time inhomogeneous Markov chain. We shall use in this section, a continuous time version of the geometric framework developed in Section \ref{geo-approach-discrete}, in convergence analysis of agents in a network with continuous time dynamics (\ref{DS-systemss}), particularly when underlying chain $\{A(t)\}$ is in a continuous time version of Class $\mathcal{P}^*$.

Let $\Phi(t,\tau)$, $t,\tau \geq 0$, represent the state transition matrix of system associated with (\ref{DS-systemss}), i.e., 
\begin{equation}
  x(t) = \Phi(t,\tau) x(\tau), \, \forall t \geq \tau \geq 0.
  \label{mgeneral-1}
\end{equation}
Note that similar to the discrete time case, $\Phi(t,\tau)$ is a stochastic matrix for every $t \geq \tau \geq 0$. More specifically, $\Phi_{i,j}(t,\tau)$ can be considered as transition probability of a backward propagating inhomogeneous Markov chain. In particular, for every $t_2 \geq t_1 \geq \tau \geq 0$, we have:
  	\begin{equation}
    		\Phi_{i,j}(t_2,\tau) = \sum_{k} \Phi_{i,k}(t_2,t_1) \Phi_{k,j}(t_1,\tau),
  	\end{equation}
	with the conditions:
  	\begin{equation}
    		\Phi_{i,j}(t,\tau) \geq 0,
  	\end{equation}
  	\begin{equation}
    		\sum_{j} \Phi_{i,j}(t,\tau) = 1,
  	\end{equation}
  	\begin{equation}
    		\Phi_{i,j}(t,t) = \delta_{ij},
  	\end{equation}
  	where $\delta_{ij}$ is the Kronecker symbol.
	
	Underlying chain $\{A(t)\}$ is said to be ergodic if for every $\tau \geq 0$, $\Phi(t,\tau)$ converges to a matrix with equal rows as $t \rightarrow \infty$. Similar to the discrete time case, ergodicity of $\{A(t)\}$ is equivalent to the occurrence of unconditional consensus in (\ref{DS-systemss}). Moreover, $\{A(t)\}$ is class-ergodic if for every $\tau \geq 0$, $\lim_{t \rightarrow \infty}\Phi(t,\tau)$ exists, but with possibly distinct rows. Chain $\{ A(t) \}$ is class-ergodic if and only if multiple consensus occurs in (\ref{DS-systemss}) unconditionally. Recall that the associated state transition matrix associated with (\ref{DS-systemss}) can be expressed via the Peano-Baker series (see \cite[Section 1.3]{Brockett:70}):
\begin{equation}
  \begin{array}{ll}
		  \Phi(t,\tau) = I_{N\times N} & \hspace{-.1in} + \int_{\tau}^t A(\sigma_1)d\sigma_1 \vspace{.05in}\\
			                     & \hspace{-.1in}+ \int_{\tau}^t A(\sigma_1) \int_{\tau}^{\sigma_1} A(\sigma_2) d\sigma_2 d\sigma_1 \vspace{.05in}\\
					    & \hspace{-.1in}+ \int_{\tau}^t A(\sigma_1) \int_{\tau}^{\sigma_1} A(\sigma_2) \int_{\tau}^{\sigma_2} A(\sigma_3) d\sigma_3 d\sigma_2 d\sigma_1 \vspace{.05in}\\
					    & \hspace{-.1in}+ \cdots,
  \end{array}
  \label{state transition matrix-1}
\end{equation}
where $I_{N \times N}$ denotes the $N\times N$ identity matrix. Remember that state transition matrix $\Phi(t,\tau)$ is invertible for every $t \geq \tau \geq 0$.

Furthermore, once again, based on \cite{Kolmo:36}, we know that for every state transition matrix $\Phi(t,\tau)$, $t,\tau \geq 0$, there exists an absolute probability sequence $\{ \pi(t) \}$, $t \geq 0$, such that:
\begin{equation}
  \pi(\tau) = \pi(t) \Phi(t,\tau), \, \forall t,\tau \geq 0.
\end{equation}
Having recalled the state transition matrix and the absolute probability sequence for the continuous time model (\ref{DS-systemss}), we can now carry out a continuous time version of the geometric framework developed in Section \ref{geo-approach-discrete}. Once again, for every $t \geq \tau \geq 0$, assume that $C_{t,\tau}$ is the convex hull of columns of $\Phi'(t,\tau)$, or equivalently transposed rows of $\Phi(t,\tau)$. Remember that each column of $\Phi'(t,\tau)$ is a stochastic vector as in the discrete time case. Now, note that Lemma \ref{DS-convex-hull} holds for the continuous time as well, since its proof remains valid assuming that the time indices refer to continuous time. Therefore, we again assume that limiting polytopes $C_{\tau}$'s, $\tau \geq 0$, exist. Let $c_{\tau}$ be the number of vertices of $C_{\tau}$. We show in the following that, $c_{\tau}$, $\tau \geq 0$, is constant (unlike the discrete time case in which $c_{\tau}$ was monotonic increasing with respect to $\tau$). Assume that $\tau_2 \geq \tau_1 \geq 0$ are two arbitrary time instants. We wish to show that $c_{\tau_1}=c_{\tau_2}$. Define linear operator $\phi_{\tau_2,\tau_1}:\mathbb{R}^N \rightarrow \mathbb{R}^N$ by:
  		\begin{equation}
    			\phi_{\tau_2,\tau_1}(v) \triangleq \Phi'(\tau_2,\tau_1)v, \, \forall v \in \mathbb{R}^N.
    		\label{operator-1}
  		\end{equation}
		Note now that from properties of state transition matrices, for $t \geq \tau_2 \geq \tau_1 \geq 0$, we have:
  		\begin{equation}
    			\Phi'(t,\tau_1) =  \Phi'(\tau_2,\tau_1) \Phi'(t,\tau_2).
  		\label{nested-12}
  		\end{equation}
		Therefore, in view of (\ref{nested-12}) by taking $t$ to infinity, the vertices of $C_{\tau_2}$ are uniquely mapped to vectors in $\mathbb{R}^N$ which because of the linearity of map (\ref{operator-1}), will play the role of vertices for the generation of convex hull $C_{\tau_1}$. Also, it is not difficult to show that the images of vertices of $C_{\tau_2}$ must remain vertices of $C_{\tau_1}$, for if one of the images of a vertex of $C_{\tau_2}$, say $v$,  turned out to be a convex combination of other vertices of $C_{\tau_1}$, this would also be true for the inverse images of these vertices (also vertices of $C_{\tau_2}$ due to invertibility of matrix $\Phi'(\tau_2,\tau_1)$), and $v$ would then fail to be a vertex of $C_{\tau_2}$. In conclusion, $C_{\tau_1}$ and $C_{\tau_2}$ will have the same number of vertices, and (\ref{operator-1}) constitutes a one to one map between corresponding pairs of vertices. One may now use the same argument to extend Theorem \ref{ocean detected} to the continuous time case while $t_0$, the initial time in Theorem \ref{ocean detected}, can be chosen arbitrarily here (recall that for Theorem \ref{ocean detected} to be true, $C_{t_0}$ must have had the maximum number of vertices among all $C_{\tau}$'s, and since $c_{\tau}$ is constant for $\tau \geq 0$ in the continuous time case, $t_0$ can be chosen arbitrarily).

We now aim to take advantage of Theorem \ref{ocean detected} to address the limiting behavior of system (\ref{DS-systemss}) when underlying chain $\{ A(t) \}$ is in Class $\mathcal{P}^*$.
\begin{lemma}
  For every $j \in \mathcal{V}$,
	\begin{equation}
	  \pi_j(\tau) \leq \inf \left\{\sum_{i \in \mathcal{V}}\Phi_{i,j}(t,\tau) \, | \, t \geq \tau\right\}.
	\end{equation}
	\label{DS-inf}
\end{lemma}
\begin{proof}
  Obvious, since for every $t \geq \tau$:
	\begin{equation}
	  \pi_j(\tau) = \pi(t) \Phi^j(t,\tau) = \sum_{i \in \mathcal{V}} \pi_i(t) \Phi_{i,j}(t,\tau) \leq \sum_{i \in \mathcal{V}}\Phi_{i,j}(t,\tau).
	\end{equation}
\end{proof}

Adopting the same definition of Class $\mathcal{P}^*$ as in the discrete time case (see Section \ref{Touri}), we state the following lemma.

\begin{lemma}
  A state transition matrix $\Phi(t,\tau)$, $t,\tau \geq 0$, associated with (\ref{DS-systemss}), is in Class $\mathcal{P}^*$ if and only if for every $j \in \mathcal{V}$:
	\begin{equation}
	  \inf \left\{\sum_{i \in \mathcal{V}}\Phi_{i,j}(t,\tau) \, | \, t \geq \tau\right\} > 0.
	\end{equation}
	\label{DS-pstar}
\end{lemma}
\begin{proof}
  The \textit{only if} part is an immediate result of Lemma \ref{DS-inf}, and the \textit{if} part is a result of the way the existence of the absolute probability sequence can be obtained in \cite{Kolmo:36} by always choosing to initialize agent probabilities on finite intervals with a uniform distribution.
\end{proof}

Let the infinite flow graph of a continuous time chain $\{A(t)\}$ is defined according to Definition \ref{infinite flow graph-1} by replacing summation with integral. The following theorem describes the convergence properties of system (\ref{DS-systemss}) when the underlying chain is in Class $\mathcal{P}^*$.

\begin{theorem}
\label{DS-cont}
  If state transition matrix $\Phi(t,\tau)$, $t,\tau \geq 0$, is in Class $\mathcal{P}^*$, then multiple consensus occurs unconditionally in system (\ref{DS-systemss}). Moreover, the number of consensus clusters is equal to the number of the components of the infinite flow graph of the transition chain. In particular, consensus occurs unconditionally if and only if the infinite flow property holds.
\end{theorem}

The following theorem clarifies that Theorem \ref{DS-cont} generalizes continuous time consensus results of \cite{Hend:11}.

\begin{theorem}
  If transition chain $\{A(t)\}$ in (\ref{DS-systemss}) is cut-balanced, then state transition matrix $\Phi(t,\tau)$, $t \geq \tau \geq 0$, is in Class $\mathcal{P}^*$.
\end{theorem}
\begin{proof}
  Let $\{A(t)\}$ be cut-balanced with bound $K$. Assume that $\Phi(t,\tau)$, $t \geq \tau \geq 0$, is the state transition matrix associated with (\ref{DS-systemss}). In view of Lemma \ref{DS-pstar}, our aim is to show that: $1/N e' \Phi_A(t,\tau) \geq p^*e'$, for some $p^* > 0$, where $e' = \begin{bmatrix} 1 & \cdots & 1 \end{bmatrix}$, and the inequality is to be understood element-wise.
	
	Assume that $\alpha = \sup \{-a_{ii}(t') \, | \, i \in \mathcal{V}, \tau \leq t' \leq t\}$. Let chain $B$ be such that $B(t') = A(t') + 2\alpha I$, $\forall \tau \leq t' \leq t$, where $I$ is the identity matrix. It is easy to verify that:
	\begin{equation}
	  \Phi_B(t,\tau) = e^{2\alpha (t-\tau)} \Phi_A(t,\tau).
	\end{equation}
	Moreover, by construction, on-diagonal elements of $B(t')$, $\tau \leq t' \leq t$, are greater than or equal to $\alpha$. Note that $B(t')$ ($\tau \leq t' \leq t$) is not a stochastic matrix; instead each of its rows sums up to $2 \alpha$. We calculate in the following, $1/N e' \Phi_B(t,\tau)$. Therefore, from the Peano-Baker series (\ref{state transition matrix-1}), the expression:
	\begin{equation}
	  \frac{1}{N}e' \int_{\tau}^t B(\sigma_1) \int_{\tau}^{\sigma_1} B(\sigma_2)\cdots \int_{\tau}^{\sigma_{k-1}}B(\sigma_k)d\sigma_k \cdots d\sigma_1
		\label{expression}
	\end{equation}
	is of interest. Expression (\ref{expression}) is equal to:
	\begin{equation}
	  \frac{(2\alpha)^k}{N}e' \int_{\tau}^t \frac{B(\sigma_1)}{2\alpha} \int_{\tau}^{\sigma_1} \frac{B(\sigma_2)}{2\alpha}\cdots \int_{\tau}^{\sigma_{k-1}}\frac{B(\sigma_k)}{2\alpha}d\sigma_k \cdots d\sigma_1,
		\label{expression1}
	\end{equation}
	which is also equal to:
	\begin{equation}
	  (2\alpha)^k \int_{\tau}^t\int_{\tau}^{\sigma_1}\int_{\tau}^{\sigma_{k-1}} \frac{1}{N}e' \frac{B(\sigma_1)}{2\alpha} \frac{B(\sigma_2)}{2\alpha}\cdots\frac{B(\sigma_k)}{2\alpha}d\sigma_k \cdots d\sigma_1.
		\label{expression2}
	\end{equation}
	Note that $B(t')/2\alpha$ is a sequence of transition matrices which generates a Markov chain which is both cut-balanced and self-confident, and hence in Class $\mathcal{P}^*$ (\cite[Theorem 7]{Touri:11c}).  As a result, there exists a positive $p^*$ such that:
	\begin{equation}
	  \frac{1}{N}e' \frac{B(\sigma_1)}{2\alpha}\cdot \frac{B(\sigma_2)}{2\alpha}\cdot\cdots\cdot\frac{B(\sigma_k)}{2\alpha} \geq p^* e'.
		\label{expression3}
	\end{equation}
	Inequality (\ref{expression3}) implies that expression (\ref{expression2}), and consequently expression (\ref{expression}), is greater than or equal to $(2\alpha)^k p^* (t-\tau)^k/k!$. Now, if we write $1/N e' \Phi_B(t,\tau)$ as sum of expressions like (\ref{expression}), we have:
	\begin{equation}
	  1/N e' \Phi_B(t,\tau) \geq \sum_{k=0}^{\infty} \frac{(2\alpha)^kp^*(t-\tau)^k}{k!} = p^*e^{2\alpha(t-\tau)}.
	\end{equation}
	Thus,
	\begin{equation}
	  1/N e' \Phi_A(t,\tau) \geq p^*e^{2\alpha(t-\tau)}.e^{-2\alpha(t-\tau)} = p^*,
	\end{equation}
	and from Lemma \ref{DS-pstar} the theorem is proved.
 \end{proof}


\section{Conclusion}
\label{Conclusion-DS}

We considered a general linear distributed averaging algorithm in both discrete time and continuous time. Following \cite{Touri:3}, and recalling the notion of jets from \cite{Blackwell:45}, we introduced a property of chains of stochastic matrices, more precisely, the infinite jet-flow property in the discrete time case. The latter property is shown to be a strong necessary condition for ergodicity of the chain. Moreover, for the chain to be class-ergodic, the infinite jet-flow property must hold over each connected component of the infinite flow graph, as defined in \cite{Touri:11c}.

We then illustrated the close relationship between Sonin's D-S Theorem and convergence properties of linear consensus algorithms. By employing the D-S Theorem, we showed in the discrete time case that the necessary conditions found earlier are also sufficient in case the chain is in Class $\mathcal{P}^*$ \cite{Touri:11c}. We argued that the obtained equivalent conditions for ergodicity and class-ergodicity of chains in Class $\mathcal{P}^*$ can subsume the previous related results in the literature, \cite{Hend:11,Bolouki:12b,Touri:11c} in particular.

A geometric approach was then introduced to interpret the jets in the D-S Theorem. The approach turned out to be a powerful method to rediscover our aforementioned results, and also to extend them to the continuous time case. In future work, we shall attempt an extension of our results to the case when the number of agents increases to infinity, although the D-S Theorem holds only if $N$ is finite.


\bibliography{ExtendedVersion}
\bibliographystyle{IEEEtran}


\end{document}